\theoremstyle{plain}
 \newtheorem{theorem}{Theorem}[section]
 \newtheorem{exmp}[theorem]{Example}
 \newtheorem{Rem}[theorem]{Remark}
\newtheorem{thm-def}[theorem]{Theorem/Definition}
\newtheorem{defi}[theorem]{Definition}
\newtheorem{Prop}[theorem]{Proposition}
\newtheorem{Lem}[theorem]{Lemma}
\newtheorem{Cor}[theorem]{Corollary}
\newcommand{\rk}{{\rm rk\,}}
\newcommand{\ind}{{\rm ind}}
\newcommand{\homnk}{{\rm Hom} (\mathbb C^n, \mathbb C^{n+k})}
\newcommand{\indPHN}{{\rm ind}_{\rm PHN}\,}
\newcommand{\indrad}{{\rm ind}_{\rm rad}\,}
\def\Projan{\mathop{\rm Projan}}
\def\dim{\hbox{\rm dim\hskip 2pt}}
\let\Sum=\sum \def\sum{\Sum\nolimits}
\def\part#1#2{{\partial#1\over\partial#2}}
\begin{document}

\author[T. Gaffney]{Terence Gaffney}\thanks{T.~Gaffney was partially supported by PVE-CNPq Proc. 401565/2014-9}
 \address{T. Gaffney, Department of Mathematics\\
  Northeastern University\\
  Boston, MA 02215}
\email{t.gaffney@neu.edu}
\author[M. A. Ruas]{Maria Aparecida Ruas}\thanks{M.A.~Ruas was partially supported by FAPESP Proc. 2014/00304-2 and CNPq Proc. 
	306306/2015-8  }
\address {M. A. Ruas\\
  Instituto de Ci\^encias Matem\'aticas e de Computa\c{c}\~ao - USP\\
 Av. Trabalhador s\~ao-carlense, 400 - Centro\\
CEP: 13566-590 - S\~ao Carlos - SP, Brazil}
\email{maasruas@icmc.usp.br}

\subjclass[2010]{Primary }

\date{}

\title{ Equisingularity and  EIDS}
\begin{abstract}
We continue the study of the equisingularity of determinantal singularities for essentially isolated singularities (EIDS). These singularities are generic except at isolated points. 
\end{abstract}

\maketitle

\section{Introduction}

In this paper we continue the search for necessary and sufficient invariants for the Whitney equisingularity of a family of spaces. We want invariants which depend only on the members of the family, whose independence of parameter ensures that an equisingularity condition holds.  Successful examples include hypersurfaces (%\cite {T_C}), 
(\cite{GG}) and complete intersections with isolated singularities (\cite{G-2}). The attempt to extend these results to smoothable determinantal singularities led to the introduction of the notion of the landscape of a singularity. 

Choosing the  landscape of a singularity $X$ consists of defining the allowable deformations that include the set, and its generic perturbations. Each set should have a unique generic element that it deforms to up to equivalence.  Describing the connection between  the infinitesimal geometry of $X$ and the topology of the generic element related to $X$ is part of understanding the landscape. 

%In studying the equisingularity of a family of isolated singularities, studying the landscape begins by fixing a component of the base space of the versal deformation, from which the given family is induced. This can be done explicitly or implicitly as in \cite{G-R}  where we restrict to determinantal deformations. The next step is to determine the generic fiber of the versal deformation to which all members of our family can be deformed. Invariants associated with the geometry/topology of this general member  provide important information about the singularities in the original family. In order to determine the infinitesimal invariants of sets in a landscape, it is necessary to determine the infinitesimal first order deformations coming from the allowable deformations of a family. The elements form a module of which the Jacobian module of the singularity is a submodule. The infinitesimal invariants of X are defined using this inclusion. 

In \cite{G-R} there is an example of a singularity for which there are two different choices of landscape. In the example, for each choice of landscape, there is a Whitney equisingular family which contains $X$. The invariants of $X$ which control the Whitney equisingularity of the family depend on the choice of landscape. On the infinitesimal side, the key distinguishing feature is the invariant related to the curvature of the module of allowable infinitesimal first order deformations.  On the topological side, each choice of landscape gives a different generic object to which $X$ deforms, and their differing topology is reflected in the differing infinitesimal invariants. For further discussion of the notion of a landscape see \cite{GS}.

In this paper we extend the framework of \cite{G-R} to the simplest non-smooth\-a\-ble and non-isolated singularities--the essentially isolated determinantal  singularities (EIDS). These were introduced as an object of study by  Ebeling and 
Gusein-Zade in \cite{E-GZ}.

A determinantal singularity $X^d\subset{\mathbb C}^q$ is defined by the minors of a matrix $F_X$ whose entries are elements of ${\mathcal O}_q$, where $d$ the dimension of $X$ is the expected dimension for the order of the chosen minors. Thus, a determinantal singularity $X$, with presentation matrix $F_X$, can be viewed as the intersection of the graph of $F_X$, seen as a map from ${\mathbb C}^q$ to $Hom ({\mathbb C}^n, {\mathbb C}^{n+k})$, with ${\mathbb C}^q\times {\Sigma}^t$, where ${\Sigma}^t$, the matrices in $Hom ({\mathbb C}^n, {\mathbb C}^{n+k})$ of $\text{rank}< t,\,  t\leq n.$   A determinantal singularity is an EIDS, if $F_X$ is transverse to the rank stratification except possibly at the origin. This is discussed in greater detail in the next section.

Here is the framework we extend to this case. The equisingularity conditions of interest  are concerned with limits of tangent hyperplanes; these are studied by a modification of the family of spaces; the failure of the equisingularity condition is equivalent to the fiber of the modification being larger than expected. The modifications are spaces associated with modules. The existence of large dimensional fibers in the modification is equivalent to the non-emptiness of the polar variety of the module. This part of the framework was introduced in \cite{Gaff1}.

%The choice of landscape provides a measure of the singularity of the members of a %family. 
%Different specializations of the generic element of the landscape % to the members %of %the family are possible if the family is not equisingular. These different
% specializations 
%leave infinitesimal traces in the form of large dimensional fibers in the conormal %modification of the family of sets. 
The invariant $m_d(X)$ defined below measures the vanishing topology in the smoothing, and is related to the polar varieties of the relative Jacobean module of the family. Its independence from parameter is equivalent to the emptiness of the polar variety of the Jacobean module.  

The invariant  $m_d(X)$, through the theory of the multiplicity of pairs of modules, can be related to invariants related to the infinitesimal geometry of the singularity. We do this  in \ref{inf}. This allows us to compute the invariant in terms of the presentation matrix of $X$ independent of the family of which $X$ is a member.

The information that we need about the topology of the generic fiber in order to give a topological interpretation of $m_d(X)$ is already contained in \cite{E-GZ}. We apply the needed information in \ref{mtop}. 

These two steps, the relation of $m_d(X)$ to the topology of the generic element, and its calculation in terms of infinitesimal invariants were introduced into the framework in \cite{G-R}.

In this paper the sets under consideration have non-isolated singularities in general. 
% The cleanest statement  about the topological interpretation of the polar %invariants, and the Whitney equisingularity of the family of sets  comes from %considering all of the strata at once. These appear in \ref{mtop} and \ref{We}. 
In the equations that appear before \ref{mtop} the fact that our singularities are non-isolated appears very clearly; the topological invariant of a stratum $X$ depends on the infinitesimal invariants of all the strata in $\bar{X}$. This indicates a phenomenon which we can look for in other non-isolated settings. The statement of \ref{We} is similar, in that we are naturally led to consider all of the strata at once due to their interaction, as opposed to showing that the pairs of strata are Whitney separately. \ref{We}  gives a necessary and sufficient criterion for the Whitney equisingularity of families of essentially isolated determinantal singularities (EIDS).  We give an application of our results to the characterization of  generic hyperplanes using our invariants. We also give an example of our theorems, which is of independent interest for what it shows about the multiplicity of a pair of modules in the determinantal setting.

%As this work shows, since the development of all of these ideas-- the allowable deformations, the induced first order linear deformations, the determination of generic elements, are necessary for the computation of the invariants of equisingularity, it is useful to give this framework of ideas a name.

%The results of this paper justify the landscape point of view and the usefulness of the machinery developed in \cite{Gaff1} and added to in \cite{G-R}.

%\section{ Invariants Infinitesimal and Topological}

\section{Definitions and previous results}

We start with some notation. 

Given the germ of an analytic set $X$ let $X_{reg}$ denote the smooth points of $X$. A determinantal variety $X$ (of type $(n+k,n,t)$) in an open domain $U\subset\mathbb C^q$ is a variety of dimension $d:=q-(n-t+1)(n+k-t+1) $ defined by the condition $\rk F(x) < t$ where $t\le n$, $F(x)=\left(f_{ij}(x)\right)$ is an $(n+k)\times n$-matrix ($i=1,\ldots, n+k$, $j=1,\ldots, n$), whose entries $f_{ij}(x)$  are complex analytic functions on $U$. In other words, $X$ is defined by all $t\times t$-minors of the matrix $F(x)$. 
%the equations $f^t_{IJ}(x)=0$ for all subsets $I\subset\{1, \ldots, n+k\}$, %$J\subset\{1, \ldots, n\}$ with $t$ elements, where $f^t_{IJ}(x)$ is the %corresponding $t\times t$-minor of the matrix $F(x)$. 

This definition can be reformulated in the following way. We can view  $F$ as a map to $\homnk$, and let $\Sigma^t$ be the subset of $\homnk$ consisting of linear maps of rank less than $t$. The variety $\Sigma^t$ has codimension $(n+k-t+1)(n-t+1)$ in $\homnk$. 

The representation of the variety $\Sigma^{t}$ as the union of $\Sigma^{i}\setminus \Sigma^{i-1}$, $i=1, \ldots, t$, is a stratification of $\Sigma^{t}$, which is locally holomorphically trivial.
The  determinantal variety $ _tX$ is the preimage $F^{-1}(\Sigma^t)$ of the variety $\Sigma^t$ (subject to the condition that ${\rm codim}\, _tX = {\rm codim}\,\Sigma^t$).
For $0\le i\le t$, let $_iX=F^{-1}(\Sigma^{i})$. If $X=_iX$ then we drop $i$ from the notation.

All $X$ have an associated fixed presentation matrix, $F_X$. (We suppress $X$ from the notation if it is understood.) Allowable deformations of $X$ arise from deformations of the entries of $F$ so the format of $F$ also determines the allowable first order infinitesimal deformations.

If $F$ is transverse to the rank stratification of $\homnk$ for $x\ne 0$, then the germ of $X$ at a singular point is holomorphic to either  the product of $\Sigma^t$ with an affine space or a transverse slice of  $\Sigma^t$ . The following definitions were given by Ebeling and Gusein-Zade in  \cite{E-GZ}:

\begin{defi} A point $x \in X=F^{-1}(\Sigma^t)$ is called {\it essentially non-singular} if, at the point $x$, $F$ is transversal to the corresponding stratum of the variety $\Sigma^t$. 
\end{defi}

%Making an analogy with singularities of maps, the germs of determinantal singularities at  essentially non-singular points correspond to stable germs; in singularities of %maps the germs which are stable except at the  origin are the finitely determined germs, and they have many nice properties. The analogous object in this setting is:

\begin{defi}
A germ $(X,0)\subset(\mathbb C^q,0)$ of a determinantal variety has an {\em isolated essentially singular point} at the origin if it has only essentially non-singular points in a punctured neighborhood of the origin in $X$. Such a germ is an {\it essentially isolated determinantal singularity} or {EIDS}.
\end{defi}

Any ICIS is an example of an {EIDS}.  If an {EIDS} has an isolated singularity, we call it an {IDS}.

EIDS can be studied from the viewpoint of singularities of maps. The group  $\mathcal G=\mathcal R \ltimes \mathcal H,$  semidirect product of the groups of germs of diffeomorphisms
$\mathcal R= \{ h: (\mathbb C^q,0) \to (\mathbb C^q,0) \}$ and $\mathcal H= \{ (A,B) :  A \in GL_n(\mathcal O_q), B \in GL_m(\mathcal O_q)\},$
where
$GL_n(\mathcal O_q)$ and $GL_m(\mathcal O_q)$ are invertible matrices with entries in $\mathcal O_q$ acts 
on the space of map-germs $F: (\mathbb C^q,0) \to \homnk .$   We say that $F_1\sim_{\mathcal G} F_2$ if there exist $h \in \mathcal R,$ $(A,B) \in \mathcal H$
such that $F_2=Ah^{*}(F_1)B^{-1}.$ 
%The classifications of $\mathcal G$- simple singularities  has been studied for various spaces of matrices. (See \cite{F}, \cite{F-N} for Cohen-Macaulay codimension 2 singularities, \cite{B}, \cite{GZ} for symmetric matrices and \cite{BT} for general square matrices ).
%were recently studied by  Fr\"{u}hbis-Kr\"{u}ger and  Neumer \cite{FK, FK-N} for Cohen-Macaulay codimension 2
%singularities (m=n+1) and by Bruce \cite{B} and Goryunov-Zakalyukin \cite{GZ} for symmetric matrices, Haslinger \cite{H} for skew-symmetric matrices and by  %Bruce-Tari  \cite{BT} for general square matrices. 
The group $\mathcal G$ is a geometric
 subgroup of the contact group (see \cite{miriam}). The germs of determinantal singularities at  essentially non-singular points correspond to $\mathcal G$- stable germs;  the
varieties  $_iX=F^{-1}(\Sigma^i) $ are  EIDS, for all $1\leq i \leq \text{min}\{n,m\}$ if and only if $F$ is $\mathcal G$-finitely determined
(\cite{miriam}).  

A set is {\it stable} if the $\mathcal O_q$ module of its allowable infinitesimal deformations is the same as its Jacobean module. Thus, the germs of the 
$\{\Sigma^i\}$ at all points are stable. A set $S$ is {\it universal} if the allowable infinitesimal deformations of $F^{-1}(S)$ are the pullback by $F$ of the allowable  infinitesimal deformations of $S$. Thus,  the germs of the 
$\{\Sigma^i\}$ at all points are universal. When $S$ is a determinantal variety,
the module of its infinitesimal deformations is called the normal module, denoted  by $N(S).$ 
%These varieties have a well defined singular Minor fiber defined by a {\in stabilization } of $F.$ 

A  deformation $F_s: U \subset  \mathbb C^q   \to \operatorname{Hom}(\mathbb C^n,\mathbb C^{n+k})$ of $F$ which is transverse to the rank stratification is called a \textit{stabilization} of $F$. The variety $ X_s=  F^{-1}_s(\Sigma^t)$ is an
\textit{essential smoothing} of $X$ (\cite[Section 1]{E-GZ}). The topology of the essential smoothings is uniquely given for fixed choice of landscape; they are also called  the {\it singular Milnor fibers} of the EIDS $X.$
%When  $q < (n-t+2) (n+k-t+2)$ the essential smoothing is a genuine smoothing %(\cite{E-GZ}). 
%%See \cite{DP1, DP, DP3, FK-Z, BOT} for recent results on their topology.

Let $\tilde F: U \subset  \mathbb C^q \times \mathbb C   \to \operatorname{Hom}(\mathbb C^n,\mathbb C^{n+k})$ be a one parameter deformation  of $F: (\mathbb C^q,0)\to \operatorname{Hom}(\mathbb C^n,\mathbb C^{n+k}),$  such that $\tilde F_s$ is a stabilization of $F$ for all $s \neq 0$ and hence
	%According to Thom's Transversality Theorem, $F$ always admits a stabilization %$\widetilde F$. 
 $\widetilde {\mathcal X}_s= \widetilde F_s^{-1}(\Sigma^t)$ an
	\textit{essential smoothing} of $X=F^{-1}(\Sigma^t)$ for all $s \neq 0$.  We call $\tilde F$ a {\it stabilization family}, $\widetilde {\mathcal X}$ an {\it essential smoothing family}. 

%A  deformation $\tilde F: U \subset  \mathbb C^q   \to \operatorname{Hom}(\mathbb C^n,\mathbb C^{n+k})$ of $F$ which is transverse to the rank stratification is called a \textit{stabilization} of $F$.
%According to Thom's Transversality Theorem, $F$ always admits a stabilization $\widetilde F$. 
%The variety $\widetilde X= \widetilde F^{-1}(\Sigma^t)$ is an
%\textit{essential smoothing} of $X$ (\cite[Section 1]{E-GZ}). The topology of the essential smoothings is uniquely given; they are also called  the {\it singular Milnor fibers} of the EIDS $X.$
%When  $q < (n-t+2) (n+k-t+2)$ the essential smoothing is a genuine smoothing (\cite{E-GZ}). See \cite{DP1, DP, DP3, FK-Z, BOT} for recent results
%on their topology.

%Moreover, according to the Thom transversality theorem an EIDS always admits an {\it{essential smoothing}} (\cite{EG}, Section $1$),  and in the specific case that %$n<(s-t+2) (p-t+2)$ the {\it{essential smoothing}} is a genuine smoothing. 
% % % % % % % %

% % % % % % % % % % % % % % % % % % % % % % % % % % % % % % % % % % % % % % % % % %
\section{Families of EIDS}

We will consider families of {EIDS}; families are denoted by $\mathcal X$. 

%The elements in the preceding paragraphs %make up the landscape for an {EIDS}. These %elements consist of a set of allowable %families which include the set and  the %generic elements that appear in allowable %families. The stabilization is the unique %generic element that an {EIDS}  deforms to. %The first order infinitesimal deformations %of our sets are determined by the allowable %deformations and form a module which we %denote by $N(X)$. Describing the connection %between the infinitesimal geometry of $X$ %and the topology of the generic element %related to $X$ is our first goal, and is an %important part  of understanding the %landscape.

If we are studying the Whitney equisingularity of a family we denote the parameter space by $Y$ and assume $Y$ is embedded in $\mathcal X$ as a linear subspace.  For simplicity of exposition we impose the following condition on our families.

\begin{defi} A $s$-parameter family  $\mathcal X$ of {EIDS} is {\it good}  if there exists a neighborhood $U$ of $Y$ such that  $F_{\mathcal X(y)}$ is transverse to the rank stratification off the origin for all $y\in U$. 
\end{defi}

This condition enables us to prove:

\begin{Prop} Suppose  $_t\mathcal X$ is a good family of {EIDS} with representation matrix $F$. Then $\{ _i\mathcal X_{reg}\setminus Y\}$, $i=1,\dots ,t$ is a locally analytically trivial stratification of 
$\mathcal X\setminus Y,$ where $ _i\mathcal X_{reg}$ denotes the stratum $_i\mathcal X\setminus  _{i-1}\mathcal X.$ 
\end{Prop}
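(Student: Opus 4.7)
The plan is to pull back, via $M$, the local holomorphic triviality of the rank stratification of $\homnk$ to the total space of the family, once the goodness hypothesis has been upgraded from slicewise transversality to transversality of the full parameterized map. I would regard $M$ as a holomorphic map from a neighborhood $U$ of $Y$ in $\mathbb C^q\times Y$ to $\homnk$, whose restriction to each slice $\mathbb C^q\times\{y\}$ is, by goodness, transverse to every rank stratum $\Sigma^i\setminus\Sigma^{i-1}$ away from the origin of the slice.

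First I would verify that the full map $M\colon U\to\homnk$ is itself transverse to each $\Sigma^i\setminus\Sigma^{i-1}$ at every point of $U\setminus Y$. This is a linear algebra observation: if $\iota\colon\mathbb C^q\hookrightarrow\mathbb C^q\times Y$ denotes the inclusion of the slice through $p$, then $d(M\circ\iota)_p$ factors through $dM_p$, so transversality of the slicewise restriction at $p$ already forces the image of $dM_p$ to span a complement to $T_{M(p)}(\Sigma^i\setminus\Sigma^{i-1})$. Next, I would fix $p\in({}_i\mathcal X\setminus{}_{i-1}\mathcal X)\setminus Y$, set $q=M(p)$, and invoke the local holomorphic triviality of $\{\Sigma^j\setminus\Sigma^{j-1}\}$ recalled in the preceding paragraph of the paper: choose a neighborhood $W$ of $q$ and a biholomorphism $\phi\colon W\to A\times S$, with $A$ a neighborhood of $q$ in $\Sigma^i\setminus\Sigma^{i-1}$ and $S$ a transverse slice through $q$, sending $\Sigma^j\cap W$ onto $A\times(\Sigma^j\cap S)$ for every $j$.

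Thanks to the ambient transversality, the composition $\pi_S\circ\phi\circ M$ is a holomorphic submersion from a neighborhood $V$ of $p$ onto $S$; the holomorphic rank theorem then provides coordinates on $V$ in which this submersion becomes a linear projection, and in those coordinates ${}_j\mathcal X\cap V$ is cut out as the product of a fiber of the projection with $\Sigma^j\cap S$. This simultaneously exhibits a local analytic trivialization of the entire filtration $\{{}_j\mathcal X\setminus Y\}_{j=1}^t$ at $p$ and shows that the induced decomposition of $\mathcal X\setminus Y$ is a stratification. The main obstacle is really the first step, the promotion of slicewise transversality to ambient transversality; once that is in hand, the pullback of a locally holomorphically trivial analytic stratification along a transverse holomorphic map is essentially formal.
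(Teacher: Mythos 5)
Your proof is correct and takes essentially the same route as the paper's (which is only a two-sentence sketch): smoothness and local analytic triviality of the strata $\{{}_i\mathcal X\setminus Y\}$ are pulled back from the locally holomorphically trivial rank stratification of $\homnk$ via transversality of $M$. The one step you make explicit that the paper leaves implicit---promoting the slicewise transversality of the $M_{\mathcal X(y)}$ in the definition of a good family to transversality of the full parameterized map, via the factorization $d(M\circ\iota)_p = dM_p\circ d\iota_p$---is handled correctly.
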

\begin{proof} From the definition of good family for $i$,  $\{ _i\mathcal X_{reg}\setminus Y\}$ is smooth, since $F_{\mathcal X}$ is transverse to the rank stratification. That the strata are locally analytically trivial follows because the $\Sigma^i$ are and $F$ is transverse to these.
\end{proof} 

The good condition implies that in a $s$-parameter family, no set  of points from $ _i\mathcal X_{reg}$ can split off from the origin if the expected dimension of $ _i\mathcal X_{reg}$ is less than $s$. It also implies that there cannot be subsets of points where $\{ _i\mathcal X_{reg}\setminus Y\}$ is singular which include the origin in their closure even if the expected dimension of $\{ _i\mathcal X_{reg}\}$ is greater than or equal to $s$.

%\begin{exmp}
% For families  of IDS, the good condition means that isolated singularities do not %split off from the origin. For Cohen-Macaulay codimension $2$ singularities, the %following criterium for goodness holds (see \cite{miriam}). Let $I(G)=\langle %G_1,\ldots, G_{n+1} \rangle$ be
%the $\mathcal O_{z,y}$-ideal generated by the $n\times n$-minors $G_1,\ldots, %G_{n+1}$ of the matrix $F_{\mathcal X}(z,y)$ and $J_z(G)$ the ideal generated by
%the $2\times 2$ minors of the Jacobian matrix of $G=(G_1,\ldots, G_{n+1}),$ with %respect to the variables $z.$ Then the following conditions are equivalent. 
%\begin{itemize} 
%\item [(a)] The family $\mathcal X$ is good

%\item [(b)]  $G^{-1}(0) \cap \Sigma_z G= Y,$ where $\Sigma_z G=V(J_z(G))$

%\item [(c)] There exists a positive integer $s$ such that $I(G)+J_z(G) \supset %m_{Y}^s \mathcal O_{z,y},$ where $m_Y \subset \mathcal O_{z,y}$ is the 
%ideal of functions which are $0$ at the parameter space $Y.$

% \end{itemize} 

%\end{exmp} 

% % % % % % % % % % % % % % % % % % % % % % % % % % % % % %
%\section{ Invariants Infinitesimal and Topological}

One of the main invariants we will use is $m_d(X^d)$. This was introduced in \cite{G1} for the study of ICIS singularities, and for isolated singularities whose versal deformations have a smooth base in  \cite{Gaff1}. It was used in \cite{G-R}, \cite{BOT} and \cite{RP} to study a smoothable {IDS}.
%If $X^d$ is  a smoothable {IDS}, then the invariant $m_d(X^d)$ was used in the papers \cite{GR}, \cite{} and \cite{}. 
In the {IDS} context, it is the number of critical points that a generic linear form has when restricted to the generic fiber in a smoothing $\mathcal X$ of $X$. 
%It is also the multiplicity of the polar curve of the relative Jacobean module $JM_z(\mathcal X)$ over the parameter space at the origin. 

 In the {EIDS} context, instead of a smoothing, we have a stabilization--a determinantal  deformation of $X$ to the generic fiber (the essential smoothing of $X$.) 
 %The stabilization is part of the package of ideas which make up the landscape of %$X$. 
 The notion of $d$-polar variety extends to this setting as we now explain.

%Let $\tilde F: U \subset  \mathbb C^q \times \mathbb C   \to \operatorname{Hom}(\mathbb C^n,\mathbb C^{n+k})$ be a one parameter deformation  of $F: (\mathbb C^q,0)\to \operatorname{Hom}(\mathbb C^n,\mathbb C^{n+k}) ,$  such that $\tilde F_s$ is a stabilization of $F$ for all $s \neq 0$ and
	%According to Thom's Transversality Theorem, $F$ always admits a stabilization %$\widetilde F$. 
% $\widetilde {\mathcal X}_s= \widetilde F_s^{-1}(\Sigma^t)$ an
	%\textit{essential smoothing} of $X=F^{-1}(\Sigma^t).$ We call $\tilde F$ a stabilization family, $\widetilde {\mathcal X}$ an essential smoothing family. 
	% The topology of the essential smoothings is uniquely given; they are also %called  the {\it singular Milnor fibers} of the EIDS $X.$
	Let $\tilde F$ be a stabilization family of $X$, $\widetilde {\mathcal{X}}$ an essential smoothing family of $X$.
	
	We  write
	$$\pi :\widetilde {\mathcal{X}}\subset \mathbb C^{q}\times \mathbb C \to \mathbb C,$$  ${\pi}^{-1}(s)=\widetilde{\mathcal X}_s$, $\widetilde{\mathcal X}_0=X$.
	Let $S(\widetilde {\mathcal{X}})$ be the singular set of $\widetilde{\mathcal X},$
	so that   $(\widetilde{\mathcal X})_{reg}=\widetilde{\mathcal X}-S(\widetilde {\mathcal{X}}).$  Choose a linear form $p\: \mathbb C^q\to \mathbb C$, consider the critical set of the map ${{(\pi,p)} \mid}(\widetilde {\mathcal{X}}-S(\widetilde {\mathcal{X}}))$, and take its closure in $\widetilde {\mathcal{X}}$. This is the relative polar curve of $\widetilde {\mathcal{X}}$ with respect to $p$. If the multiplicity of the polar curve at the origin over $ \mathbb C$ has the generic value among all choices of $p$, then we call the polar curve with respect to $p$, the relative polar curve of $\widetilde {\mathcal{X}}$. Of course, this means that the relative polar curve of $\widetilde {\mathcal{X}}$ has many representatives. 
	
	There is another construction of the polar curve which proves useful. Consider $C(JM_z(\mathcal{X}))$, the conormal modification of $\mathcal{X}$ using the module $JM_z(\mathcal{X})$.  Then,  $C(JM_z(\mathcal{X}))\subset \mathcal{X}\times {\mathbb P}^{q-1}$, and its fiber over a smooth point $x$ of $\mathcal{X}_s$ consists of the tangent hyperplanes to $\mathcal{X}_s$ in ${\mathbb C}^q$ at $x$. Denote the projection of $C(JM_z(\mathcal{X}))$ by $\pi_c$, the projection to ${\mathbb P}^{q-1}$ by $\pi_{{\mathbb P}}$. Then $$P_d(	\widetilde {\mathcal{X}})=\pi_c(\pi^{-1}_{{\mathbb P}}(H)),$$where $H$ is the kernel of a generic projection $p$. (For more details  on these constructions see lecture 3 in \cite{GS}.)
	%(For more details on the polar varieties of a module, and the equivalence of the %two notions of the relative polar curve see the section on the polar varieties %of a module in lecture 3 in \cite{GS}.)
	
\begin{defi}  Let $P_d(	\widetilde {\mathcal{X}})$ be the relative polar curve of an essential smoothing family of $X$, then 
$m_d(X):=m_{\mathbb C}(P_d(\widetilde {\mathcal{X}})),$ where $m_{\mathbb C}$ is the multiplicity of the set $P_d(	\widetilde {\mathcal{X}})$  over ${\mathbb C}$ at the origin.
\end{defi}

From the construction $m_d(X)$ is independent of $p$, and independent of the stabilization, once we fix the landscape.

	 %Clearly we can impose any generic property we like on $p$.
	  We can ask that $H$, kernel of $p$ is not a limiting tangent hyperplane to $X$ at the origin, and that ${ {p} \mid}_{(\widetilde{\mathcal X}_{s})_{reg}}\to \mathbb C$ is  a Morse function for $s\ne 0$. We verify this last point in the proof of the next proposition.
	  %, which follows as a corollary of this fact.
	
	%Then we can construct the polar varieties of $JM_z(\widetilde {\mathcal{X}})$ as usual. In the case of the polar of codimension $d$ this is done by projecting This is dFor the case of the polar of codimension $d$, we select a linear projection $p\: \mathbb C^N\to \mathbb C$ whose kernel $H$

 %\begin{defi}  
%Let $p: X \to \mathbb C$  be a linear projection with isolated singularity on $X,$  let $\widetilde p: U \to \mathbb C$ be   such that $\widetilde p(x,0)=p(x)$ and for all 
%$s \neq 0, \,  \widetilde{p_s}: {\widetilde{\mathcal X_s}} \to \mathbb C$ is  a Morse function.

%Let $$P_d(X, \pi, p )=\overline{\Sigma(\tilde{p}, \pi)\mid}_{\tilde{\mathcal X}_{reg}}$$ be the relative polar curve of $X$ relative to $\pi$ and $p.$
%We define $m_d(X,\pi,{p})=m_0(P_d(X,\pi,{p})),$ where $m_0$ is the multiplicity of the set $P_d(X,\pi,{p})$ at the origin.

%\end{defi}

%When 
%In general $m_d(X,\pi,{p})$ depends on the choices of $\widetilde{\mathcal X}$ and $\widetilde{p}$. When the variety $X$ has a 
%unique smoothing $\widetilde{X}$, then 
%$m_d(X,\pi,{p})$ depends only on $X$ and ${p}$. Moreover, 
%When ${p}$ is a generic linear embedding, $m_d(X,\pi,{p})$  is an invariant of the EIDS $X$, denoted by $m_d(X)$, $d$-multiplicity of $X$ at the origin.

%\end{defi}

\begin{Prop}\label{prop:morse}
% {\rm ( Proposition 1, \cite{E-GZ} or Proposition 2.3, \cite{SVT}) }
With the above notation
$$ m_d(X)=n_{0}$$ where  $n_{0}$ is the number of critical points of ${{p} \mid}_{(\widetilde{\mathcal X}_{s})_{reg}}$.

\end{Prop}
\begin{proof}
Let $\widetilde{\mathcal X}$ be a family of essential smoothings for $X$. Then $C(JM_z(\widetilde{\mathcal X}))$ has dimension 
$(d+1)+(q-d-1)=q$, as $d+1$ is the dimension of $\widetilde{\mathcal X}$ and $(q-d-1)$ is the fiber dimension of $C(JM_z(\widetilde{\mathcal X}))$ over a generic point. It follows that the dimension of $\pi^{-1}_c(S(\widetilde{\mathcal X}))$ must be at most $q-1$, so we can choose $H$ so that $\pi^{-1}_{{\mathbb P}}(H)$ intersects $\pi^{-1}_c(S(\widetilde{\mathcal X}))$ only at points over the origin, and intersects $C(JM_z(\widetilde{\mathcal X}))$ transversely off $p^{-1}_c(0)$. This implies that the fiber
of $P_d(	\widetilde {\mathcal{X}})$ over a generic point $s$ of ${\mathbb C}$ lies in ${(\widetilde{\mathcal X}_{s})_{reg}}$. We can assume $H$ is not a critical value of the projection of ${{C(JM_z(\widetilde{\mathcal X}))}\mid}_{(\widetilde{\mathcal X})_{reg}}$ to ${\mathbb P}^{q-1}$. This implies that  ${\mathbb C}^q\times H$ and ${{C(JM_z(\widetilde{\mathcal X}))}\mid}_{(\widetilde{\mathcal X}_{s})_{reg}}$ meet transversely for generic $s$. In turn, this means that the points of intersection project to Morse points of  ${{p} \mid}_{(\widetilde{\mathcal X}_{s})_{reg}}$. Since the number of these points is $m_d(X)$, the proposition is proved.
\end{proof}

%Then the multiplicity of the polar curve of the Jacobean module $JM_z(\mathcal X)$ %over the parameter space at the origin in a stabilization is the number of critical %points that a generic linear form has on the complement of the singular set on a %generic fiber. Call this number $m_d(X^d)$.  

If  $X=\cup _iX$, for $1\le i\le t$, then it makes sense to consider $m_{d_i}(_iX)$ where $d_i$ is the dimension of $_iX$, and the expected dimension of $_iX$ is greater than $0$. If the expected dimension of $_iX$ is zero, we define $m_0(_iX)$ to be the colength of the ideal defining $_iX$.  As the next proposition shows, $m_0(_iX)$ is the number of points of type $_iX$ on a generic fiber of the stabilization.

\begin{Prop} \label{propo:0dim} Suppose $X$ is a determinantal variety, and $_iX$ has expected dimension $0$. Then $m_0(_iX)$ is the number of points of type $_iX$ on a generic fiber of the stabilization of $X$.\end{Prop}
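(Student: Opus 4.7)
The plan is to realize $m_0({}_iX)$ as the length of a $0$-dimensional Cohen--Macaulay scheme, deform it flatly along the stabilization, and observe that on a generic fiber the scheme becomes reduced and entirely of type ${}_iX$. In more detail, the ideal $I$ defining ${}_iX\subset(\mathbb{C}^q,0)$ is the pullback by $M$ of the ideal of $\Sigma^i$ in $\homnk$, i.e.\ the ideal generated by the $i\times i$ minors of $M$. The expected-dimension-zero hypothesis says $q=(n+k-i+1)(n-i+1)=\mathrm{codim}\,\Sigma^i$. By the classical theorem of Hochster--Eagon on generic perfection of determinantal ideals, $I$ is Cohen--Macaulay of codimension $q$, so $\mathcal O_q/I$ has finite length, which is $m_0({}_iX)$ by definition.

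Next I would put this in a family. The stabilization is a deformation $\{M_y\}_{y\in Y}$ of $M=M_0$ with $M_y$ transverse to the rank stratification of $\homnk$ for $y$ generic. Let ${}_i\mathcal X\subset \mathbb C^q\times Y$ be defined by the $i\times i$ minors of the family $M_y$. Since the expected codimension is $q$ and equals the codimension at $y=0$, generic perfection applied to the universal family guarantees that ${}_i\mathcal X$ is Cohen--Macaulay of codimension $q$ in $\mathbb C^q\times Y$. Hence the projection $\pi\colon {}_i\mathcal X\to Y$ is flat with $0$-dimensional fibers, and the length $\dim_{\mathbb C}\mathcal O_{\pi^{-1}(y)}$ is constant in $y$, equal to $m_0({}_iX)$ over the special fiber.

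Finally I would identify the generic fiber combinatorially. For $y$ generic, $M_y$ is transverse to every stratum $\Sigma^j\setminus\Sigma^{j-1}$. The codimension of $\Sigma^{i-1}$ in $\homnk$ is $(n+k-i+2)(n-i+2)>q$, so transversality forces $M_y^{-1}(\Sigma^{i-1})=\emptyset$ near the origin. Therefore every point of $M_y^{-1}(\Sigma^i)$ is a point of type ${}_iX$ (lies in $\Sigma^i\setminus\Sigma^{i-1}$), and at each such point the transverse intersection of $\mathrm{graph}(M_y)$ with $\mathbb C^q\times(\Sigma^i\setminus\Sigma^{i-1})$ is $0$-dimensional of complementary dimension, hence reduced. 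Combining with the flatness statement, the number of points of type ${}_iX$ on the generic fiber is exactly $m_0({}_iX)$.

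The main obstacle in this plan is the invocation of flatness/conservation of length: one has to be sure that the passage from ``ideal of minors at the origin'' to ``family of ideals of minors'' preserves Cohen--Macaulayness, which is exactly the content of generic perfection and relies crucially on the codimension equalling the expected value. Once that is in place, both the reduction step and the count of points of type ${}_iX$ are forced by the transversality built into the definition of the stabilization.
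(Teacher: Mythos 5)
Your proposal is correct and follows essentially the same route as the paper: both arguments rest on the Cohen--Macaulayness of the determinantal family ${}_i\mathcal X$ to equate the colength of the ideal defining ${}_iX$ (the special fiber) with the number of points in a generic fiber of the projection to the base --- the paper phrases this as ``degree of $\pi$ equals multiplicity of $y\circ\pi$ equals colength,'' while you phrase it as flatness plus conservation of fiber length, which is the same fact. You do supply details the paper leaves implicit (that transversality rules out points of $\Sigma^{i-1}$ on the generic fiber and forces those points to be reduced), but this is elaboration rather than a different method.
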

\begin{proof} In a stabilization $\mathcal X$ of $X$, $_i\mathcal X$ is a determinantal variety, hence Cohen-Macaulay. Consider the projection $\pi$ of $_i\mathcal X$ to the base $Y^1$, where $y$ is the coordinate on the base. Then the degree of $\pi$ restricted to $_i\mathcal X$ is the multiplicity of $(y\circ \pi)$ in the local ring of $_i\mathcal X$ at the origin. Since $_i\mathcal X$ is Cohen-Macaulay, this multiplicity is just the dimension of the ring mod  $(y\circ \pi)$, in turn this is just the colength of the ideal defining $_iX$.
\end{proof}

The invariant $m_d(X^d)$ for an {EIDS} as we have defined it, appears in Ebeling and Gusein-Zade's paper  \cite{E-GZ} as the specialization of Poincar\'{e}-Hopf-Nash index of a differential 1-form $\omega$ denoted $\ind_{\rm PHN} \, \omega$. The invariant  $m_d(X^d)$ is  $\ind_{\rm PHN} \, dl$, $l$ a generic linear form. This follows from {\rm  Proposition 1, \cite{E-GZ} } which shows that the value of this invariant for a generic linear form is the number of Morse points of the restriction of the linear form to the generic fiber of a stabilization, and \ref{prop:morse} shows the same is true for $m_d$. We will use formulas developed for 
$\ind_{\rm PHN} \, \omega$ to connect the values of $m_d(_iX,0)$ with the topology of an essential smoothing of $_iX$.

%\begin{Rem}
%	In the above formula $\overline{\chi}(X,0)$ is the reduced Euler characteristic %of $\widetilde{X}$, the stabilization of $X$. The integers $n_{it}$ are given by the %formulas 
%$n_{it}=
%(-1)^{(k)(t-i)} \binom{n-i}{n-t}$. Further, if $\omega=dl$, $l$ is a generic linear %form, then \\ $\indrad (dl,X,0) =(-1)^{d-1}\overline {\chi}( \widetilde{X\cap H}),$ %$H=l^{-1}(\epsilon)$ for $\epsilon$ sufficiently small  (\cite{E-GZ1}, Theorem 3). %This invariant does not depend on the generic hyperplane $H$ or on the
% stabilization of the determinantal section $X\cap H,$ so we simply write $\indrad %(dl,X,0) =(-1)^{d-1}\overline {\chi}( {X\cap H}).$
 
% \end{Rem}
 
%In the next proposition we   make  a connection between the $0$-dimensional polar multiplicities of the $_iX$ and the topology of their stabilizations. 
%This is half of the picture that we want 

{\begin{Prop}\label{mtop} In a family of {EIDS} the $m_{d_i}( \hskip  1pt _iX,0),\, d_i=\dim _iX,$  are constant for all $i$ if and only if   $$(-1)^{d _{i} }{\chi}( _{i}X,0)+(-1)^{d _{i}-1}{\chi}(  _{i}X\cap H,0)$$ are constant for all $i$. 
\end{Prop}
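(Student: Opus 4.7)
The plan is to invert the triangular linear system displayed just above the proposition. Reading the statement as the natural one suggested by that system (the second summand should be $(-1)^{\dim {}_iX-1}\chi({}_iX\cap H,0)$ rather than the apparent typographical repetition of $\chi({}_iX,0)$), Proposition \ref{Propnit} applied at the origin of a fiber gives, for each admissible index $j$,
$$L_j \;:=\; (-1)^{\dim {}_jX}\chi({}_jX,0) + (-1)^{\dim {}_jX-1}\chi({}_jX\cap H,0) \;=\; \sum_{i=1}^{j} n_{ij}\, m_{\dim {}_iX}({}_iX,0).$$

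The key observation is that the coefficient matrix $(n_{ij})_{1\le i\le j\le t}$ is lower triangular with diagonal entries
$$n_{jj} \;=\; (-1)^{k(j-j)}\binom{n-j}{n-j} \;=\; 1,$$
so it is a unipotent lower-triangular integer matrix, hence invertible over $\mathbb{Z}$. Consequently, for each $j$ the invariant $m_{\dim {}_jX}({}_jX,0)$ is a universal integer linear combination of $L_1,\dots ,L_j$, and conversely $L_j$ is a universal integer linear combination of $m_{\dim {}_1X}({}_1X,0),\dots ,m_{\dim {}_jX}({}_jX,0)$. This gives the desired equivalence at a single point.

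To pass to a family, I apply the identity above at the origin of each fiber $\mathcal{X}(y)$. By the good-family hypothesis, the presentation matrix $M_{\mathcal{X}(y)}$ is transverse to the rank stratification off the origin, so every fiber is again an EIDS and Proposition \ref{Propnit} is legitimately applicable fiber-by-fiber. The invertibility of the triangular system then makes constancy of the $L_i$ in $y$ equivalent to constancy of the $m_{\dim {}_iX}({}_iX,0)$ in $y$, which is exactly the claim.

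The only delicate point is the convention, described in the paragraph preceding the proposition, that equations below the lowest occurring stratum are simply dropped. This restricts the system to a principal lower-right block of $(n_{ij})$, which is itself unipotent lower-triangular, so the inversion argument survives without change. I do not foresee a serious obstacle: the whole proof is essentially the remark that $(n_{ij})$ is unipotent lower-triangular, so no analytic or topological input beyond Proposition \ref{Propnit} is required.
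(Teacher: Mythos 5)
Your argument is correct and is essentially the paper's own proof made explicit: the paper simply says the equivalence is ``obvious from the structure of the equations,'' and you supply the reason, namely that the coefficient matrix $(n_{ij})$ is unipotent triangular (since $n_{jj}=\binom{n-j}{n-j}=1$) and hence invertible over $\mathbb{Z}$, so constancy of the left-hand sides $L_j$ is equivalent to constancy of the $m_{\dim {}_jX}({}_jX,0)$. You are also right that the statement contains a typographical slip and the second Euler characteristic should be that of the generic hyperplane section $\widetilde{{}_iX\cap H}$.
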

\begin{proof} 

The following formula relating the radial index of a differential form $\omega$ and the $\ind_{\rm PHN} \, \omega$ is due to Ebeling and Gusein-Zade (\cite{E-GZ} proposition 4).
\begin{equation}
\indrad (\omega,X,0) = \sum_{i=1}^t n_{it} \indPHN (\omega,  \hskip 1pt _iX,0) + (-1)^{d -1} \overline{\chi}(X,0),\label{$*$}\end{equation} where $d=\dim X,$ $\overline{\chi}(X,0)$ is the reduced Euler characteristic of the intersection of an essential smoothing of $X$ with a small ball centered at the origin, and the integers $n_{it}$ are given by the formulas 
$n_{it}=
(-1)^{(k)(t-i)} \binom{n-i}{n-t}$.

		Assuming $X= {}_tX$, we can re-write Ebeling-Gusein Zade's formula in \eqref{$*$} using $dl$ for $\omega$ and $m_{d_i}(_iX)$ for $ \indPHN (dl,  \hskip 1pt _iX,0), $ $d_i=\dim _iX,$ apply it to each stratum in turn getting:
	
	$$ (-1)^{d}{\chi}(X,0)+(-1)^{d-1}{\chi}( {X\cap H},0) = \sum_{i=1}^t n_{it} m_{d_i}( \hskip  1pt _iX,0)$$
	
	$$ (-1)^{d _{t-1} }{\chi}( _{t-1}X,0)+(-1)^{d _{t-1}-1}{\chi}({  _{t-1}X\cap H},0) = \sum_{i=1}^{t-1} n_{i(t-1)}  m_{d_i}( \hskip  1pt _iX,0)$$
	$$\dots$$
	$$ (-1)^{d _{1} }{\chi}( _{1}X,0)+(-1)^{d _{1}-1}{\chi}(  {  _{1}X\cap H},0)  = m_{d_1}( \hskip 1pt _1X).$$
	
It is understood that if the last equation corresponds to a stratum of dimension zero, then both sides of the last equation have only $1$ term, since ${\chi}(  {  _{1}X\cap H},0) =0$ and the right hand side is the colength of an ideal as explained above.

\end{proof}

\section{The main result}

 Now we develop the connection between the $0$-dimensional polar multiplicities and infinitesimal invariants of the $_iX$, constructed from modules associated to them.

These invariants are closely related to the theory of integral closure of modules, which we briefly review.

Let  $(X, x)$ be a germ of a complex analytic space and $X$ a
small representative of the germ, and let $\mathcal{O}_{X}$ denote the
structure sheaf on a complex analytic space $X$. For simplicity we assume $X$ is equidimensional, and if $M$ is a sheaf of modules on $X$, $M$  a subsheaf of a free sheaf $F$, then $g$ is the generic rank of $M$ on each component of $X$. 
%If we fix a set of generators of a module $M$, then we write $[M]$ for the matrix of %generators.

Suppose $(X, x)$ is the germ of a complex analytic space,
$M$ a submodule of $\mathcal{O}_{X,x}^{p}$. Then $h \in
\mathcal{O}_{X,x}^{p}$ is in the {\it integral closure of $M$}, denoted
$\overline{M}$, if for all analytic $\phi : (\mathbb{C}, 0) \to (X,
x)$, $h \circ \phi \in (\phi^{*}M)\mathcal{O}_{1}$. If $M$ is a
submodule of $N$ and $\overline{M} = \overline{N}$ we say that $M$
is a {\it reduction} of $N$ (see \cite{G-2}).

%To check the definition it suffices to check along a finite number of curves whose %generic point is in the Zariski open subset of $X$ along which $M$ has maximal rank. %(Cf. \cite {G-2}.)

If a module $M$ has finite colength in $\mathcal{O}_{X,x}^{p}$, it
is possible to attach a number to the module, its Buchsbaum-Rim
multiplicity,  $e(M,\mathcal{O}_{X,x}^{p}).$ We can also define the multiplicity $e(M,N)$ of a pair of
modules $M \subset N$, $M$ of finite colength in $N$, as well, even
if $N$ does not have finite colength in $\mathcal{O}_{X}^{p}$. We refer to \cite{Gaff1}, section 2 for the definition of multiplicity of pairs of modules.

One of our invariants will be the multiplicity of the pair $(JM(_iX), N(_iX))$, where $JM(_iX)$ is the Jacobian module and $N(_iX)$ is the module of infinitesimal first order deformations of $_iX$, induced from the first order deformations  of the presentation matrix of $X$. We must show that $(JM(_iX), N(_iX))$ is well-defined for {EIDS}.

We do this by showing that the transversality of $F$ to the strata of $\homnk$ off the origin implies that off the origin the two modules agree. To do this we use the holomorphic triviality of the stratification of $\homnk$. To show that the multiplicity is well defined, it is actually enough to have a Whitney stratification, so we treat this case as well in the next lemmas.

We first work on the target, then on the source. Recall that if $(S,x)$ is the germ of a submanifold at $x$, then a direct transversal to $S$ at $x$ is a plane $T$ of dimension complementary to $S$ which is transverse to $S$. Given such  a plane $T$, we may make a change of coordinates so that the ambient space is holomorphic to $T\times S$. In this coordinate system we denote by $JM(X)_T$, the submodule of $JM(X)$ generated by the partial derivatives of the defining functions with respect to a basis of $T$.

\begin{Lem} \label{direct}Let $X$ be  a stratified subset of $ ({\mathbb C}^N,x)$.

{\rm i)}  Suppose $X$ has a locally holomorphically trivial stratification. Let $S_x$ denote the stratum containing $x$. Suppose $T$ is a direct transversal to $S$ at $x$. Then $JM(X)\subset {JM(X)_T}$.

{\rm ii)} Suppose the stratification is a Whitney stratification, and  let $S_x$ denote the stratum containing $x$. Suppose $T$ is a direct transversal to $S$ at $x$. Then $JM(X)\subset\overline{JM(X)_T}$.

\end{Lem}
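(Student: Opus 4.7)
The plan is to produce, for each basis vector $\partial/\partial s_k$ of $T_xS$, a vector (or vector field) tangent to $X$ whose $S$-component equals $\partial/\partial s_k$. Writing such an object in the form $V_k = \partial/\partial s_k + \sum_j a_{kj}\,\partial/\partial t_j$ and invoking tangency ($V_k(f)\in I(X)$ for $f$ generating the ideal of $X$) yields on $X$ the relation $\partial f/\partial s_k = -\sum_j a_{kj}\,\partial f/\partial t_j$, exhibiting each $s$-partial as an $\mathcal O_X$-combination of the $t$-partials. The difference between (i) and (ii) is whether the coefficients $a_{kj}$ come from a globally holomorphic vector field or only from holomorphic sections along each test curve, possibly after Puiseux reparametrization.

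For (i), the locally holomorphic triviality provides a biholomorphism $\Phi$ of a neighborhood of $x$ with a product $V\times W \subset T\times S$, fixing $S$ pointwise and sending $X$ to $X_T\times W$. The vector fields $V_k$ on the source that are $\Phi$-related to the constant fields $\partial/\partial s_k$ on the target are holomorphic and tangent to $X$; since $\Phi$ preserves the $s$-coordinate, a short matrix computation shows they have exactly the form $\partial/\partial s_k+\sum_j a_{kj}\,\partial/\partial t_j$. The identity displayed above then holds globally in $\mathcal O_X$, giving $JM(X)\subset JM(X)_T$.

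For (ii), I use the curve criterion stated in the paper: it suffices to check along analytic curves $\phi\colon(\mathbb C,0)\to X$ whose generic point lies in the open stratum $X_0$. After a Puiseux reparametrization (which scales valuations linearly and therefore does not affect the desired inclusion), the Gauss map $u\mapsto T_{\phi(u)}X_0\in G(d,N)$ extends holomorphically across $u=0$ to a limit $\tau$, and Whitney's condition (a) forces $T_xS\subset\tau$. Trivializing the pull-back of the tautological bundle on $G(d,N)$ near $u=0$ produces holomorphic sections $v_1(u),\dots,v_{\dim S}(u)$ of $T_{\phi(u)}X_0$ whose values at $u=0$ form a chosen basis of $T_xS$.

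Decomposing $v_k(u)=v_k^t(u)+v_k^s(u)$ in $\mathbb C^N = T\oplus T_xS$, the matrix $V^s(u)$ whose $k$th column is $v_k^s(u)$ satisfies $V^s(0)=I$ and is therefore invertible over $\mathcal O_1$. The tangency identities $df(\phi(u))\cdot v_k(u)=0$, valid for $u\neq 0$ and extending to $u=0$ by continuity, read in matrix form
\[
[\partial f/\partial t]\circ\phi\cdot V^t + [\partial f/\partial s]\circ\phi\cdot V^s = 0,
\]
whence $[\partial f/\partial s]\circ\phi = -[\partial f/\partial t]\circ\phi\cdot V^t (V^s)^{-1}$ with $V^t(V^s)^{-1}$ holomorphic in $u$. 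This is precisely the integral closure condition along $\phi$. The main technical step is the construction of the adapted holomorphic frame: it rests on the compactness of $G(d,N)$ (yielding the extended Gauss map after Puiseux) together with Whitney (a), which places $T_xS$ inside the limit $\tau$ so that a frame compatible with the splitting $T\oplus T_xS$ can be extracted.
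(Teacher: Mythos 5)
Your proof is correct. Part (i) is essentially the paper's own argument: both of you use the local holomorphic trivialization to lift each $\part{}{s_i}$ to a holomorphic vector field $\xi=\part{}{s_i}+\sum h_j\part{}{t_j}$ tangent to the strata, and then read off $\part{F}{s_i}=-\sum h_j\part{F}{t_j}$ from $DF(\xi)=0$ on $X$ (the paper additionally observes $h_j\in m_S$, so it in fact gets $JM(X)_S\subset m_S\,JM(X)_T$, a refinement you don't need for the stated inclusion). For part (ii) you take a genuinely different route: the paper simply quotes from \cite{G-2} the integral-closure characterization of the Whitney conditions, in the sharper form $JM(X)_S\subset\overline{m_S\,JM(X)_T}$, whereas you give a self-contained proof of exactly the stated inclusion via the curve criterion, the extension of the Gauss map along a test curve into the compact Grassmannian, Whitney's condition (a) to place $T_xS$ in the limiting plane $\tau$, and an adapted holomorphic frame whose $S$-component matrix is invertible at $u=0$. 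What your argument buys is transparency and a weaker hypothesis --- only condition (a) is used, and the mechanism (tangency identities solved against an invertible matrix of frame components) exactly mirrors part (i); what it gives up is the extra factor of $m_S$ that the cited result provides and that condition (b)/condition W would be needed for. Two small points you gloss over, both standard: the curve criterion lets you restrict to curves whose generic point lies in the top stratum intersected with the locus where $JM(X)_T$ has generic rank, which is what legitimizes applying condition (a) for the pair (top stratum, $S_x$); and the extension of the Gauss map across $u=0$ follows because its Pl\"ucker coordinates along the curve are holomorphic in $u$, so no essential singularity can occur.
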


\begin{proof} 

i) Choose local coordinates so that the ambient space is holomorphic to $S_x\times T$, and $X=F^{-1}(0)$. Since $S_x$ is a stratum in a holomorphically trivial stratification, with trivialization of form $({s}, r(s,t))$, $r(s,0)=(0)$, for all $i$, $\part{}{s_i}$ lifts to a holomorphic field $\xi$ tangent to every stratum of $X$, $\xi=\part{}{s_i}+\sum h_j(s,t)\part{}{t_j}$, $h_j(s,t)\in m_S$. This implies that
$DF(\xi)=0$, hence, $\part{F}{s_i}=-\sum h_j(s,t)\part{F}{t_j}$, hence $JM(X)_S\subset m_SJM(X)_T$.

ii) Since $S_x$ is a stratum in a Whitney stratification, it is known (\cite {G-2}, Theorem 2.5) that $JM(X)_S\subset {\overline {m_SJM(X)_T}}$, from which the result follows.

\end{proof}

\begin{Lem}\label {pullback} Suppose $F: ({\mathbb C}^q,0) \to  ({\mathbb C}^N,x)$, $x\in X$, $X\subset( {\mathbb C}^N,x)$.

{\rm i)} Suppose $X$ has a Whitney stratification. Let $S_x$ denote the stratum containing $x$. Suppose $F$ is transverse to $S_x$ at $x$. Let $X_F$ denote $F^{-1}(X)$. Then $F^*(JM(X))\subset\overline{JM(X_F)}$.

{\rm ii)} Suppose $X$ has a locally holomorphically trivial stratification. Let $S_x$ denote the stratum containing $x$. Suppose $F$ is transverse to $S_x$ at $x$. Let $X_F$ denote $F^{-1}(X)$. Then $F^*(JM(X))\subset {JM(X_F)}$.

\end{Lem}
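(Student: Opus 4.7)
The plan is to reduce both inclusions, via Lemma \ref{direct}, to a statement about $M^*(JM(X)_T)$, and then use transversality of $M$ together with the chain rule to express the generators of $M^*(JM(X)_T)$ in terms of the generators of $JM(X_M)$. Choose local coordinates $y = (s, t)$ on $\mathbb{C}^N$ near $x = 0$ so that $S_x = \{t = 0\}$ and $T = \{s = 0\}$, let $F$ be a system of defining functions for $X$, and split $M = (M_S, M_T)$. Transversality of $M$ to $S_x$ at $x$ is equivalent to $M_T$ being a submersion at $0$, so there exist indices $i_1, \dots, i_{\dim T}$ for which the square submatrix $A = [\partial M_{T,j}/\partial z_{i_k}]$ is invertible in $\mathcal{O}_{\mathbb{C}^q, 0}$.

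Writing $v$ for the column of generators $\bigl(\partial F/\partial y_{T,j}(M(z))\bigr)_j$ of $M^*(JM(X)_T)$, $w = \bigl(\partial F/\partial y_{S,l}(M(z))\bigr)_l$, and $B = [\partial M_{S,l}/\partial z_{i_k}]$, the chain rule gives
\[
\bigl(\partial(F \circ M)/\partial z_{i_k}\bigr)_k = A\,v + B\,w,
\]
so $v = A^{-1}\bigl[(\partial(F \circ M)/\partial z_{i_k})_k - B w\bigr]$. For part (ii), Lemma \ref{direct}(i) gives $JM(X)_S \subset m_S\,JM(X)_T$, so we may write $\partial F/\partial y_{S,l} = \sum_j c_{lj}\,\partial F/\partial y_{T,j}$ with $c_{lj} \in m_S$; pullback yields $w = C v$ for a matrix $C$ whose entries lie in $M^*(m_S) \subset \mathfrak{m}_{\mathbb{C}^q, 0}$. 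Substituting, $(I + A^{-1}BC)\,v = A^{-1}(\partial(F \circ M)/\partial z_{i_k})_k$, and since $I + A^{-1}BC$ is a unit in the matrix ring over $\mathcal{O}_{\mathbb{C}^q, 0}$ we conclude $v \in JM(X_M)$. Combined with $JM(X) \subset JM(X)_T$, this establishes (ii).

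For part (i), Lemma \ref{direct}(ii) only furnishes $JM(X)_S \subset \overline{m_S\,JM(X)_T}$, so after pullback $w \in \overline{\mathfrak{a}\,(v)}$ where $\mathfrak{a} = M^*(m_S) \subset \mathfrak{m}_{\mathbb{C}^q, 0}$, and the chain rule gives only $v \in JM(X_M) + \overline{\mathfrak{a}\,(v)}$. The main technical hurdle is upgrading this to $v \in \overline{JM(X_M)}$; I would do so via the valuative criterion applied along an arbitrary analytic curve $\psi\colon(\mathbb{C}, 0) \to (X_M, 0)$. Since $\psi^*\mathfrak{a} \subset (t)$ and integral closure is trivial in the DVR $\mathcal{O}_1$, the inclusion pulls back to $\psi^* v \subset \psi^*JM(X_M)\,\mathcal{O}_1 + (t)\,\psi^* v\,\mathcal{O}_1$; Nakayama's lemma in $\mathcal{O}_1$ then forces $\psi^* v \subset \psi^*JM(X_M)\,\mathcal{O}_1$, whence $v \in \overline{JM(X_M)}$. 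Since pullback by $M$ sends integral-closure inclusions to integral-closure inclusions, Lemma \ref{direct}(ii) yields $M^*(JM(X)) \subset \overline{M^*(JM(X)_T)} \subset \overline{JM(X_M)}$, completing (i).
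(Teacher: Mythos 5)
Your proof is correct and follows essentially the same route as the paper's: choose coordinates adapted to the transversality of $M$ to $S_x$, apply the chain rule to identify the transversal-direction partials of the composite with the pullbacks of the generators of $JM(X)_T$, invoke the previous lemma to control the $S$-direction partials, and close the gap with Nakayama's lemma (applied along test curves in $\mathcal{O}_1$ for the Whitney case). Your version merely makes the matrix manipulations and the curve-criterion bookkeeping more explicit than the paper's rather terse argument.
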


\begin{proof} i) Since $F$ is transverse to $S_x$ at $x$, there is a linear space $T_F$ of $\mathbb C^q$, whose image under $DF$ is a direct transversal to  $S_x$ at $x$, $DF(0)$ injective on $T_F$. We assume coordinates on the target chosen to fit with the previous lemma and coordinates on the source so that $\mathbb C^q$ is the product of $T_F$ and a direct transversal $A$, $DF(0)$ takes $T_F$ 
to $T$ in these coordinates, $z$ denotes coordinates on $T_F$, $y'$ denotes coordinates on $A$. Let $G$ be the map defining $X$, so that $G\circ F$ defines $X_F$. Let $\phi: \mathbb C,0\to X_F,x$ be a curve. By the chain rule $JM(X_F)$ is a submodule of $F^*JM(X)$. The generators of $JM(X_F)$, when composed with $\phi$, by the chain rule and choice of coordinates are
$$\{\part {(G\circ F)}{z_i}\circ\phi=\part G{z_i}\circ F\circ \phi\}\mod m_1\phi^*F^*JM(X)$$
as the other partial derivatives of $G\circ F$ are $0 \mod m_1\phi^*F^*JM(X)$.
Since $\part G{z_i}$ pull back to generators of $\phi^*F^*(JM(X))$ on the curve $F\circ \phi$ by the previous lemma, by Nakayama's lemma the $\{\part {(G\circ F)}{z_i}\circ\phi\}$ generate $\phi^*F^*(JM(X))$  as well. Hence, $F^*(JM(X))\subset\overline{JM(X_F)}$ by the curve  criterion.

The proof of ii) is even easier. We again use the chain rule and local coordinates, this time applying Nakayama's lemma to  $\{\part {(G\circ F)}{z_i}\}$ and $F^*JM(X)$. 

\end{proof}

\begin{Prop}  Let $F\:{\mathbb C}^q\to\homnk$, $X=F^{-1}(\Sigma^i)$, $F$ transverse to $\Sigma^i$ at the origin. Then
$JM(X)=F^*(N(\Sigma^i))=N(X)$.
\end{Prop}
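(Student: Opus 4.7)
The plan is to establish the chain $JM(X) = M^*(JM(\Sigma^i)) = M^*(N(\Sigma^i)) = N(X)$ in three short steps. The middle identification $M^*(JM(\Sigma^i)) = M^*(N(\Sigma^i))$ reduces to the tautological equality $JM(\Sigma^i) = N(\Sigma^i)$ on $\homnk$: the defining equations of $\Sigma^i$ are the $i\times i$ minors $F_\alpha$ of the tautological matrix $(m_{rs})$, so $JM(\Sigma^i)$ is generated by the column vectors $(\partial F_\alpha / \partial m_{rs})_\alpha$ as $(r,s)$ ranges over all entries. A first-order perturbation of $(m_{rs})$ by an arbitrary constant matrix $(m'_{rs})$ changes each $F_\alpha$ at first order by $\sum_{r,s} m'_{rs}(\partial F_\alpha / \partial m_{rs})$, exhibiting the very same columns as generators of $N(\Sigma^i)$.

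For the right-hand equality $M^*(N(\Sigma^i)) = N(X)$, I would write out the chain rule once: a first-order deformation $M + tM'$ of the presentation matrix of $X$ perturbs each defining function $F_\alpha \circ M$ at first order by $\sum_{r,s} M'_{rs}\cdot M^*(\partial F_\alpha / \partial m_{rs})$, and letting the entries $M'_{rs} \in \mathcal{O}_q$ vary produces precisely the generators of $M^*(N(\Sigma^i))$ obtained from the generators of Step 1.

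The main content is the left-hand equality $JM(X) = M^*(JM(\Sigma^i))$. The inclusion $JM(X) \subset M^*(JM(\Sigma^i))$ is once more the chain rule: for each $j$,
$$\bigl(\partial(F_\alpha\circ M)/\partial z_j\bigr)_\alpha \;=\; \sum_{r,s} (\partial m_{rs}/\partial z_j)\, M^*(\partial F_\alpha / \partial m_{rs})_\alpha,$$
an $\mathcal{O}_q$-linear combination of generators of $M^*(JM(\Sigma^i))$. For the reverse inclusion I would invoke part (ii) of the preceding lemma, applied to $\homnk$ with its rank stratification, which was noted at the start of the paper to be locally holomorphically trivial: since $M$ is transverse at $0$ to the stratum of $\Sigma^i$ containing $M(0)$, the lemma delivers $M^*(JM(\Sigma^i)) \subset JM(X)$.

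The hard part is precisely this final inclusion, and it is where the transversality hypothesis is decisive. Without transversality only part (i) of the lemma would apply, and we would obtain containment only up to integral closure, which is enough for equality of Buchsbaum-Rim multiplicities but not of modules. Assembling the three pieces gives $JM(X) = M^*(JM(\Sigma^i)) = M^*(N(\Sigma^i)) = N(X)$, which is the statement.
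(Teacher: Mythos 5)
Your proof is correct and follows essentially the same route as the paper's: the paper's two-line proof cites the preceding lemma (in its locally holomorphically trivial case) to get $JM(X)=M^*(JM(\Sigma^i))$ and then invokes the stability and universality of $N$, which are precisely the two explicit computations you carry out in your first two steps. One small slip in your closing remark: part (i) of that lemma also requires transversality --- the dichotomy between (i) and (ii) is Whitney versus holomorphically trivial stratification, not transversal versus non-transversal --- but this side comment does not affect the validity of your argument.
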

\begin{proof}The stratification of $\homnk$ by rank is holomorphically trivial, so \ref{pullback} implies
$JM(X)=F^*(JM(\Sigma^i))$. Since $N$ is stable and universal, $N(X)=F^*(N(\Sigma^i))=F^*(JM(\Sigma^i))$, and the result follows.

\end{proof}
\begin{Cor} If $(X,x)$ is an {EIDS}, then $e(JM(X), N(X))$ is defined.
\end{Cor}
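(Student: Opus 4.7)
The plan is to verify two things: first, that $JM(X)$ is a submodule of $N(X)$, and second, that $JM(X)$ has finite colength in $N(X)$ at the origin, which is precisely the condition needed for the pair-multiplicity $e(JM(X), N(X))$ to be defined. The inclusion $JM(X) \subseteq N(X)$ is essentially by construction: the generators of $JM(X)$ are the partial derivatives (with respect to source coordinates) of the minors defining $X$, and by the chain rule these are first-order deformations of those minors induced by infinitesimal perturbations of the entries of $M_X$, hence lie in $N(X) = M_X^{*}(N(\Sigma^t))$.

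To see that the colength is finite, I would show that the coherent sheaf $N(X)/JM(X)$ is supported only at the origin. Let $y \in X \setminus \{0\}$. Since $(X,0)$ is an EIDS, $M_X$ is transverse to the rank stratification of $\homnk$ at $y$; in particular it is transverse at $y$ to the stratum of $\homnk$ containing $M_X(y)$, say an open stratum of $\Sigma^i$. Using the local holomorphic triviality of the rank stratification along that stratum, the germ of $X$ at $y$ splits as a smooth product factor times a germ of the form $M^{-1}(\Sigma^i)$ with $M$ transverse to $\Sigma^i$ at the base point. Applying the preceding Proposition to the nontrivial factor then gives $JM(X)_y = N(X)_y$. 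Hence $N(X)/JM(X)$ is supported on $\{0\}$, so its stalk at $0$ has finite length, which is precisely the condition ensuring that $e(JM(X), N(X))$ is defined (cf.\ \cite{Gaff1}).

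The main obstacle is the passage from the preceding Proposition, whose hypothesis is that $X$ itself equals $M^{-1}(\Sigma^i)$ for the stratum to which $M$ is transverse, to our situation in which $X = {}_tX = M_X^{-1}(\Sigma^t)$ and we only know transversality of $M_X$ to some stratum $\Sigma^i$ containing $M_X(y)$, possibly with $i < t$. Working out this reduction via the local product structure provided by holomorphic triviality is the step that will require the most care; once it is handled, the rest of the argument is routine.
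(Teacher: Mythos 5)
Your proposal is correct and takes essentially the same route as the paper: the EIDS hypothesis gives transversality of $M_X$ to the rank strata off the origin, the preceding Proposition then yields $JM(X)=N(X)$ on a deleted neighborhood of the origin, and that agreement of the two modules away from the origin is precisely the condition under which $e(JM(X),N(X))$ is defined (citing \cite{KT}). The reduction you flag as the main remaining obstacle is already absorbed into the paper's Lemma~\ref{direct} and its companion, which are stated for transversality to the stratum $S_x$ containing the image point of an arbitrary locally holomorphically trivially stratified set, so no further product-structure argument is needed beyond what those lemmas supply.
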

\begin{proof} Since $X$ is an {EIDS}, $F$ is transverse to $\{\Sigma^j\}$ in a deleted neighborhood of the origin. Hence, in a deleted neighborhood of the origin, $JM(X)=N(X)$ by the previous proposition. Therefore, since the modules agree on a deleted neighborhood, the multiplicity of the pair is well defined (\cite{KT}).
\end{proof}

\begin{Rem} In the event that a set $X$ has only a Whitney stratification, and $F$ is a map which is transverse to the stratification except at the origin, then the previous couple of propositions still show that the multiplicity of the pair is well defined, provided $N$, the module of allowable infinitesimal deformations in our landscape, is universal and stable.
\end{Rem}

The other invariant we need is $F(\mathbb C^q)\cdot \Gamma_d(\Sigma^i)$, where $d$ is the dimension of $F^{-1}(\Sigma^i)$, and  $\Gamma_d(\Sigma^i)$ is the codimension $d$ polar variety of $\Sigma^i$. This invariant is an intersection number. For, $F(\mathbb C^q)\cdot \Gamma_d(\Sigma^i)$ is the intersection of the graph of $F$ in $\mathbb C^q\times\homnk$  with $\mathbb C^q\times  \Gamma_d(\Sigma^i)$, where $\Gamma_d(\Sigma^i)$ has codimension $d$ in $\Sigma^i$. Now we connect our infinitesimal invariants with the polar invariants.

%Consider the graph of $F$ in $\mathbb C^q\times\homnk$, then $F(\mathbb C^q)\cdot \Gamma_d(\Sigma^i)$ is the intersection of the graph with $\mathbb C^q\times  \Gamma_d(\Sigma^i)$, where $\Gamma_d(\Sigma^i)$ has codimension $d$ in $\Sigma^i$. (Of course, this implies $\Gamma_d(\Sigma^i)$ has codimension $q$ in $\homnk$.) Now we connect our infinitesimal invariants with the polar invariants.

%Notice that if $q\ge n(n+k)$ then $\Gamma_d(\Sigma^i)$  must be empty, so the intersection number is $0$. If $n=2, k=1$, then this is true for all $_2X$, for $q\ge 6$. (In fact, it is true in this case for $q\ge 5$.)

%Now we connect our infinitesimal invariants with the polar invariants. 

\begin{Prop} \label{inf} If $\mathcal X$ is a one parameter stabilization of an {EIDS} $_iX^d$ then
$$e(JM(X),N(X))+F(\mathbb C^q)\cdot \Gamma_d(\Sigma^i)=m_d(X).$$
\end{Prop}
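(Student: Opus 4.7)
My plan is to apply Gaffney's multiplicity-polar theorem (\cite{Gaff1}) to the pair of modules $JM_z(\mathcal X)\subset N(\mathcal X)$ on the one-parameter family $\mathcal X \to Y$, and then identify each of the resulting two terms with one side of the asserted equality.

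First, by the characterization recalled in the discussion preceding the statement, $m_d(X)$ is the multiplicity at $0$ of the polar curve $\Gamma_d(JM_z(\mathcal X))$, namely the closure in $\mathcal X$ of the critical locus of a generic linear form $l$ on the smooth parts of the fibres $\mathcal X(y)\setminus Y$. Thus it is enough to establish the two equalities
$$e(JM(X),N(X)) = \operatorname{mult}_0 \Gamma_d(JM_z(\mathcal X)) - \operatorname{mult}_0 \Gamma_d(N(\mathcal X)),$$
$$\operatorname{mult}_0 \Gamma_d(N(\mathcal X)) = M(\mathbb C^q)\cdot \Gamma_d(\Sigma^i).$$

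The first equality is the statement of the multiplicity-polar theorem once its hypotheses are checked. The modules $JM_z(\mathcal X)$ and $N(\mathcal X)$ are coherent on $\mathcal X$ of the same generic rank, with $JM_z(\mathcal X)\subset N(\mathcal X)$ since trivial deformations of $X$ pull back to $JM(X)$. By the proposition identifying $JM$ with $N$ at essentially non-singular points, combined with the good family hypothesis, they agree on $\mathcal X \setminus Y$ in a neighborhood of the origin. Hence the pair has a well-defined multiplicity $e(JM(X),N(X))$ at $0$ in the sense of Kleiman-Thorup (\cite{KT}), and the agreement away from the parameter space delivers the needed reduction property for the multiplicity-polar theorem.

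For the second equality we use $N(\mathcal X)=M^*(N(\Sigma^i))$ together with the fact that on the smooth stratum of $\Sigma^i$ the universal module $N(\Sigma^i)$ coincides with $JM(\Sigma^i)$, so the module polar variety $\Gamma_d(N(\Sigma^i))$ agrees with the classical polar variety $\Gamma_d(\Sigma^i)\subset \homnk$. The transversality of $M$ to the rank stratification off the origin, guaranteed by the EIDS hypothesis, means that polar varieties pull back transversely: $\Gamma_d(N(\mathcal X))$ is the preimage $M^{-1}(\Gamma_d(\Sigma^i))$, with the correct scheme structure. Its multiplicity at the origin is then computed via the graph of $M$ as the proper intersection $(\text{graph of }M)\cdot(\mathbb C^q\times \Gamma_d(\Sigma^i))$ in $\mathbb C^q\times \homnk$, which is exactly $M(\mathbb C^q)\cdot\Gamma_d(\Sigma^i)$.

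The main obstacle I expect is this second equality. One must carefully verify that the Grassmannian/Nash-modification definition of the module polar variety $\Gamma_d(N(\mathcal X))$ is functorial under transverse pullback, and that the rank and codimension conventions match those of the classical polar variety of $\Sigma^i$. This is where the good family hypothesis plays its role beyond the first step: it ensures the transversality needed to identify the scheme-theoretic pullback with the module polar variety, and it confines any excess intersection to the origin, so that the local multiplicity indeed equals the stated intersection number.
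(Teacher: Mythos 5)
Your proposal is correct and follows essentially the same route as the paper's proof: both apply the multiplicity--polar theorem to the pair $JM_z(\mathcal X)\subset N(\mathcal X)$, identify $\operatorname{mult}\Gamma_d(JM_z(\mathcal X))$ with $m_d(X)$, and identify $\operatorname{mult}\Gamma_d(N(\mathcal X))$ with $M(\mathbb C^q)\cdot\Gamma_d(\Sigma^i)$ via $\tilde M^{-1}(\Gamma_d(\Sigma^i))=\Gamma_d(N(\mathcal X))$ after arranging transversality. Your added attention to verifying the hypotheses (agreement of the modules off the parameter space, functoriality of the polar variety under transverse pullback) is more explicit than the paper's, but it is the same argument.
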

\begin{proof} Let $\tilde F$ be the map from $\mathbb C\times \mathbb C^q\to \homnk$ which defines $\mathcal X$. We can arrange for $\tilde F(y)$ and $\Gamma_d(\Sigma^i)$ to be transverse for $y\ne 0$. Then $F(\mathbb C^q)\cdot \Gamma_d(\Sigma^i)$ is the number of points in which $\tilde F(y)$ intersects  $\Gamma_d(\Sigma^i)$. In turn, this is the multiplicity over $\mathbb C$ of 
$\Gamma_d(N(\mathcal X))$ at the origin, as $\tilde F^{-1}(\Gamma_d(\Sigma^i))=\Gamma_d(N(\mathcal X))$.
Now we apply the multiplicity polar theorem (\cite{Gaff1}, Theorem 5.1, \cite{Gaff}, Theorem 3.22)  to $JM_z(\mathcal X)$ and $N(\mathcal X)$ we get:

$$e(JM(X), N(X))+mult_C \Gamma_d(N({\mathcal X}))=mult_C  \Gamma_d(JM_z({\mathcal X}))=m_d(X).$$
which gives the result. Notice that we used the universality of $N$ to identify $N(X)$ with $N({\mathcal X})(0)$.
\end{proof}

We can now complete the link between our infinitesimal invariants and our topological invariants. In the following corollary, we adopt the convention that if the expected dimension of a singularity type is $0$ for $X$, we use the colength of the ideal defining the singularity type for our invariant. If the expected dimension is $0$, then $\Gamma_d(\Sigma^i)=\Sigma^i$, and $F(\mathbb C^q)\cdot \Sigma^i$ becomes the colength of the ideal of $i\times i$ minors of $F$. 
%So the effect of our convention is to drop the  $e(JM(X), N(X))$ term from our invariants.

\begin{Cor}\label{formula} For an {EIDS} $X$, 
  \begin{multline*}
     (-1)^{d }{\chi}( X,0)+(-1)^{d-1}{\chi}( {X\cap H},0) = \\ \sum_{i=1}^t n_{it} (e(JM(_iX^{d_i}), N(_iX^{d_i})))+F(\mathbb C^q)\cdot \Gamma_{d_i}(\Sigma^i)).
     \end{multline*}

Further, in a family of {EIDS} $\mathcal X$, the invariants $$e(JM(_i{\mathcal X}(y)^{d_i}), N(_i {\mathcal X}(y)^{d_i}))+\tilde F(y)(\mathbb C^q)\cdot \Gamma_{d_i}(\Sigma^i)$$ are independent of $y$ iff the invariants

 $$(-1)^{\dim _{i}{\mathcal X}(y)}{\chi}( _{i}{\mathcal X}(y),0)+(-1)^{dim _{i}{\mathcal X}(y)-1}{\chi}( {{  _{i}X\cap H}}(y),0)$$ are independent of $y$ for all $i$. 

\end{Cor}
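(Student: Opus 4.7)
The plan is to combine three results that have already been established in the excerpt: Proposition \ref{Propnit} (the Ebeling--Gusein-Zade formula), Proposition \ref{inf} (which expresses $m_d$ as a sum of a Buchsbaum--Rim-type pair multiplicity and a polar intersection number), and Proposition \ref{mtop} (the equivalence between constancy of the $m_{d_i}(_iX)$ and constancy of the Euler-characteristic invariants). The work of the corollary is essentially to splice these together.

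First I would take the system of equations displayed just before Proposition \ref{mtop}, whose top line (the one for the stratum $X = {}_tX$) reads
\[
(-1)^{\dim X}\chi(X,0) + (-1)^{\dim X -1}\chi(\widetilde{X\cap H},0) = \sum_{i=1}^t n_{it}\, m_{d_i}(_iX,0).
\]
This line is itself the specialization of Proposition \ref{Propnit} to $\omega = dl$, using $\indrad(dl,X,0) = (-1)^{d-1}\bar\chi(\widetilde{X\cap H})$ and $\indPHN(dl,{}_iX,0) = m_{d_i}(_iX)$. I would then substitute the identity of Proposition \ref{inf}, namely $m_{d_i}(_iX) = e(JM(_iX^{d_i}),N(_iX^{d_i})) + M(\mathbb C^q)\cdot\Gamma_{d(i)}(\Sigma^i)$, into each term of the right-hand side, which yields the displayed equality. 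For strata $_iX$ of expected dimension zero the convention is to replace $m_0(_iX)$ by the colength of the defining ideal, and this convention is consistent because Proposition \ref{propo:0dim} shows that this colength is precisely the count of points of type $_iX$ on the stabilization, which is what $\indPHN$ computes there.

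For the ``iff'' statement in the family setting, I would apply Proposition \ref{inf} pointwise along $\mathcal X$: for each $y$ and each $i$, one has
\[
e(JM(_i\mathcal X(y)^{d_i}),N(_i\mathcal X(y)^{d_i})) + \tilde M(y)(\mathbb C^q)\cdot\Gamma_{d(i)}(\Sigma^i) = m_{d_i}(_i\mathcal X(y)).
\]
Hence the left-hand side is independent of $y$ for a given $i$ if and only if $m_{d_i}(_i\mathcal X(y))$ is. Running this equivalence over all $i$ and then invoking Proposition \ref{mtop} (applied to the fiber-wise family) converts ``$m_{d_i}$ independent of $y$ for all $i$'' into ``the signed Euler characteristics $(-1)^{\dim{}_i\mathcal X(y)}\chi(_i\mathcal X(y),0) + (-1)^{\dim{}_i\mathcal X(y)-1}\chi(\widetilde{_iX\cap H}(y),0)$ are independent of $y$ for all $i$,'' which is the statement to be proved.

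The main obstacle I anticipate is purely bookkeeping: one must check that Proposition \ref{inf} and Proposition \ref{mtop} apply uniformly across all strata $_i\mathcal X(y)$ as $y$ varies, and in particular that the ``good family'' hypothesis guarantees $\tilde M(y)$ stays transverse to $\Gamma_{d(i)}(\Sigma^i)$ generically (so the intersection number is well defined and behaves well in families) and that the pair $(JM(_i\mathcal X(y)),N(_i\mathcal X(y)))$ remains a valid pair of modules for which the multiplicity is defined, which is exactly what the Corollary preceding Remark on stable/universal $N$ provides. Once this is in place, the two equivalences chain together without further content.
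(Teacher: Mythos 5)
Your proposal is correct and follows exactly the route the paper intends: the corollary is stated without proof precisely because it is the direct splice of Proposition \ref{Propnit} (via the displayed system of equations), Proposition \ref{inf} substituted termwise, and Proposition \ref{mtop} for the family equivalence, with the zero-dimensional convention handled by Proposition \ref{propo:0dim}. Your additional bookkeeping remarks about well-definedness in the good-family setting are consistent with the paper's preceding corollary on $e(JM(X),N(X))$ being defined for EIDS.
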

\begin{proof} This follows from \ref{inf} and the formulas of \ref{mtop}.\end{proof}

The intersection numbers $F(\mathbb C^q)\cdot \Gamma_d(\Sigma^i)$ depend only on the presentation matrix $F$. In the case of $\Sigma^n$, this number has been computed in terms of the entries of $F$ in \cite{G-R}.

%Now we connect our results with Whitney equisingularity.

Turning to Whitney equisingularity, we pull together some general facts about this condition for the convenience of the reader.

\begin {Prop}\label{Wfacts}
Suppose $\mathcal X\subset {\mathbb C}^q\times{ \mathbb C}^{\ell}$ is a family of sets over $Y=0\times { \mathbb C}^{\ell}$. Then the following are equivalent:

{\rm 1)} $W$ holds for the pair $\mathcal X_{reg},Y$ at the origin (and hence on a neighborhood of the origin).

{\rm 2)} $JM(X)_Y\subset {\overline {m_YJM(\mathcal X)}}$.

{\rm 3)} $JM(X)_Y\subset {\overline {m_YJM_z(\mathcal X)}}$.

{\rm 4)} The exceptional divisor of the blowup of $C(\mathcal X)$ by $m_Y$ has equidimensional fibers over $Y$.

{\rm 5)} The exceptional divisor of the blowup of $\Projan {\mathcal R} (JM_z({\mathcal X}))$ by $m_Y$ has equidimensional fibers over $Y$. 
\end {Prop}
\begin{proof} The equivalence of 1) and 3) is \cite {G-2}, Theorem 2.5. The equivalence of 2) and 3) is easy; 3) directly implies 2) while 2) coupled with the curve criterion for integral closure and Nakayama's lemma implies 3). The equivalence of 1) and 4) is \cite{T-2}, Theorem 1.2, Chapter 5. The equivalence of 3) and 5) is seen as follows: 3) implies that  ${\overline {m_YJM(\mathcal X)}}={\overline {m_YJM_z(\mathcal X)}}$, which implies that the map from the blowup of $C(\mathcal X)$ by $m_Y$  to  the blowup of $\Projan {\mathcal R} (JM_z({\mathcal X}))$ by $m_Y$ is finite. Since 3) implies 4), 5) holds. If 5) holds, then 3) holds because by \cite{KT1}, Theorem A1,  the equidimensionality of the fibers of the exceptional divisor of the blowup of $\Projan {\mathcal R} JM_z({\mathcal X})$ by $m_Y$ over $Y$ implies there is no obstruction to the integral closure condition of 3) which holds generically over $Y$, as W holds generically, extending over all of $Y$.

\end{proof}

\begin{theorem} \label{We} Suppose ${\mathcal X}^{d+\ell}$ is a good $\ell$-dimensional family of {EIDS}. Then the family is Whitney equisingular if and only if the invariants $$e(m_YJM(_i{\mathcal X}(y)),N(_i{\mathcal X}(y)))+\tilde F(y)(\mathbb C^q)\cdot \Gamma_{d_i}(\Sigma^i)$$ are independent of $y$. 
\end{theorem}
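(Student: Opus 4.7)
The plan is to follow the strategy of \cite{G-R} for IDS and adapt it to the stratified EIDS setting via the good family hypothesis. The argument proceeds in two steps: reduce Whitney equisingularity to a stratum-by-stratum constancy condition on polar multiplicities, then convert these polar multiplicities into the stated infinitesimal invariants using the multiplicity polar theorem.

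First, by the proposition on good families, $\{{}_i\mathcal{X}\setminus Y\}_{i=1}^{t}$ is a locally analytically trivial stratification of $\mathcal{X}\setminus Y$. Whitney equisingularity of $\mathcal{X}$ along $Y$ is therefore equivalent to the Whitney (b) condition for each pair $({}_i\mathcal{X}\setminus Y, Y)$. By Gaffney's polar criterion (\cite{G-2}, \cite{Gaff1}), Whitney (b) for this pair is equivalent to the independence of $y$ of all polar multiplicities $m_{j}({}_i\mathcal{X}(y))$ for $0\le j\le d_{i}$.

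Second, I would apply the multiplicity polar theorem to the module pair $(JM_{z}({}_i\mathcal{X}), N({}_i\mathcal{X}))$ on the family over $Y$. The good family hypothesis combined with Lemma \ref{direct} and Corollary following it guarantees that $JM({}_i\mathcal{X}(y))=N({}_i\mathcal{X}(y))$ off the origin, so the pair multiplicity $e(m_{Y}JM,N)$ is well defined. The multiplicity polar theorem then expresses this $e$-invariant as a signed combination of polar multiplicities of $JM_{z}({}_i\mathcal{X})$ and $N({}_i\mathcal{X})$ over $Y$. By the universality of $N$ and the transversality of $\tilde{M}$ to the strata of $\Sigma^{i}$, in exactly the manner used in Proposition \ref{inf}, the polar contribution coming from $N$ at the origin is identified with the intersection number $\tilde{M}(y)(\mathbb{C}^{q})\cdot\Gamma_{d(i)}(\Sigma^{i})$. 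Consequently, independence of
\[
e(m_{Y}JM({}_i\mathcal{X}(y)),N({}_i\mathcal{X}(y)))+\tilde{M}(y)(\mathbb{C}^{q})\cdot\Gamma_{d(i)}(\Sigma^{i})
\]
in $y$ is equivalent to the independence of a sum of upper semicontinuous polar multiplicities of $JM_{z}({}_i\mathcal{X})$ over $Y$, which in turn forces the constancy of each individual $m_{j}({}_i\mathcal{X}(y))$. Combined with the first step, this gives both implications.

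The hard part: the principal obstacle is running the multiplicity polar theorem correctly in the EIDS stratified setting with $N$ in place of $JM$, since the two modules differ precisely at the origin where Whitney equisingularity must be checked. This requires the universality of $N$ to produce a transverse identification $\tilde{M}^{-1}(\Gamma_{d(i)}(\Sigma^{i}))=\Gamma_{d(i)}(N({}_i\mathcal{X}))$, so the $N$-contribution separates off cleanly as the stated intersection number rather than contaminating the $JM_{z}$-polar multiplicities. A secondary subtlety concerns the lowest-dimensional stratum of expected codimension $q$: there the stabilization consists of finitely many points and $m_{0}$ is a colength by Proposition \ref{propo:0dim}, so Whitney equisingularity corresponds to constancy of the number of singular points of each type on the generic fiber and falls into the same framework without modification.
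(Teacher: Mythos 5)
Your overall architecture matches the paper's: use goodness to reduce to the pairs $({}_i\mathcal X\setminus Y,\,Y)$, dispose of the zero-dimensional types by colength, and invoke the multiplicity polar theorem for the positive-dimensional strata. But the middle of your argument contains a genuine gap. You assert, citing \cite{G-2} and \cite{Gaff1}, that Whitney (b) for $({}_i\mathcal X\setminus Y, Y)$ is \emph{equivalent} to the constancy in $y$ of all the fiber polar multiplicities $m_j({}_i\mathcal X(y))$, $0\le j\le d_i$. That equivalence is established in \cite{G-2} for families of ICIS; for the determinantal strata ${}_i\mathcal X$ it is not an available black box --- in this paper it is precisely the content of Corollary \ref{coro:whitney}, which is \emph{derived from} Theorem \ref{We} via the expansion formula for $e(m_YJM,N)$. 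Taking it as an input makes the argument circular. Your second step compounds this: passing from ``the sum of polar multiplicities is constant'' to ``each individual $m_j({}_i\mathcal X(y))$ is constant'' requires upper semicontinuity of each term, which you assert but do not justify for determinantal fibers, and your phrasing conflates the fiber polar multiplicities appearing in the expansion formula with the relative polar varieties of the total-space module over $Y$ that actually appear in the multiplicity polar theorem.

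The paper's proof avoids the fiber polar multiplicities entirely. It uses the direct criterion from the multiplicity polar framework: Whitney for the open stratum holds if and only if the fiber of the blow-up of the conormal modification by $m_Y$ is equidimensional over $Y$, which holds if and only if the codimension-$d_i$ polar variety of $JM_z({}_i\mathcal X)$ (a total-space object over $Y$) is empty; the multiplicity polar theorem then identifies emptiness of that polar with independence of $e(m_YJM({}_i\mathcal X(y)),N({}_i\mathcal X(y)))+\tilde M(y)(\mathbb C^q)\cdot\Gamma_{d(i)}(\Sigma^i)$ from $y$. If you replace your Teissier-style step with this equidimensionality criterion, your proof collapses onto the paper's. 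Your final remarks on the role of universality of $N$ in identifying $\tilde M^{-1}(\Gamma_{d(i)}(\Sigma^i))$ with $\Gamma_{d(i)}(N({}_i\mathcal X))$ are correct and consistent with Proposition \ref{inf}.
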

\begin{proof} Since the family is good we only need to control the pairs of strata $(V,Y)$, $V$ some stratum in the canonical stratification of $\mathcal X-Y$. From the good hypothesis we know all  strata (except perhaps $Y$) have the  expected dimension, and the types of dimension zero are controlled by the colength of the defining ideals.  Considering the positive dimensional strata, the  Whitney conditions hold for the open stratum of our singularity, provided the fiber of the blow-up of the conormal modification by $m_Y$ is equidimensional over $Y$ (\ref{Wfacts}). This occurs if and only if the polar variety of codimension $d$ of $m_YJM_z(\mathcal X)$ is empty. We claim the independence of our invariants from $y$ hold if and only if this polar is empty.

Given $F\: {\mathbb C}^q\times {\mathbb C}^{\ell}\to Hom(n,n+k)$ which defines $\mathcal{X}$, let $\tilde F\: {\mathbb C}\times{\mathbb C}^q\times {\mathbb C}^{\ell}\to Hom(n,n+k)$ be a family of stabilizations; which means that $\tilde F_0$ is a stabilization of the germ of $F_0$ at the origin, and $\tilde F_y$ is a stabilization of $F_y,0$
for all $y$ in a Zariski open subset of ${\mathbb C}^{\ell}$. We briefly recall the construction of $\tilde F$ and $\tilde F_0$. Let $G\: {\mathbb C}^q\times {\mathbb C}^{\ell}\times Hom(n,n+k)\to Hom(n,n+k)$ given by 
$$G(z,y,A)=F(z,y)+A, G_0=G(z,0,A).$$
Then $G_0$ is a submersion, so $\{G^{-1}_0(\Sigma^i-\Sigma^{i-1})\}$ is a collection of manifolds. Consider the projections to $Hom(n,n+k)$ given by restricting the projection to $Hom(n,n+k)$ to each element of $\{G^{-1}_0(\Sigma^i-\Sigma^{i-1})\}$. Select $A_0\in Hom(n,n+k)$, such that $A_0$ is not in any discriminant of these projections. Then we may assume the same is true for $tA_0$, $t\ne 0$, $t$ small. By the method of proof of Morlet's lemma, (\cite {M}, p 6-05), the map-germ 
$\tilde F_0(t,z,0)=F(z,0)+tA_0$ is transverse to the rank stratification of $Hom(n,n+k)$ for each fixed, and sufficiently small $t\ne 0$, hence 
$\tilde F_0$ is a stabilization of $F_0$. For the construction of $\tilde F$, work with $\{G^{-1}(\Sigma^i-\Sigma^{i-1})\}$ and consider the projections to ${\mathbb C}^{\ell}\times Hom(n,n+k)$. We can assume the choice of $A_0$ is such that for $t$ sufficiently small, $t\ne 0$, there exists a Zariski open set $U$ of ${\mathbb C}^{\ell}$ such that $(y,tA_0)$ does not lie in any of the discriminants in ${\mathbb C}^{\ell}\times Hom(n,n+k)$ for $y\in U$, $t$ sufficiently small, perhaps depending on $y$, $t\ne 0$. Then, again by the method of proof of Morlet's lemma, the map-germs  $\tilde F_y(t,z)=G(y,z,tA_0)$ are transverse to the rank stratification of $Hom(n,n+k)$ for each fixed, and sufficiently small $t\ne 0$, $y\in U$. 

Now, we apply the multiplicity polar theorem again. 
We have 
$$e( JM({\mathcal X}_0), F^*_0(JM(\Sigma^i)))-e( JM({\mathcal X}_y), F^*_y(JM(\Sigma^i)))=$$
$$deg_Y\Gamma_d(JM_z({\mathcal X}))-deg_Y\Gamma_d( F^*(JM(\Sigma^i))).$$
By a branched cover argument (see \cite{G-R} Theorem 2.10 for details) 
$$deg_Y\Gamma_d( F^*(JM(\Sigma^i)))=\tilde F_0(\mathbb C^q)\cdot \Gamma_d(\Sigma^i)-\tilde F_y(\mathbb C^q)\cdot \Gamma_d(\Sigma^i).$$

Then, 
$$deg_Y\Gamma_d(JM_z({\mathcal X}))=e( JM({\mathcal X}_0), F^*_0(JM(\Sigma^i)))+\tilde F_0(\mathbb C^q)\cdot \Gamma_d(\Sigma^i)$$
$$-(e( JM({\mathcal X}_y), F^*_y(JM(\Sigma^i))))+\tilde F_y(\mathbb C^q)\cdot \Gamma_d(\Sigma^i)).$$

So, $deg_Y\Gamma_d(JM_z({\mathcal X}))$ is $0$ if and only if $e(m_YJM(_i{\mathcal X}(y)),N(_i{\mathcal X}(y)))+\tilde F(y)(\mathbb C^q)\cdot \Gamma_{d_i}(\Sigma^i)$ are independent of $y$.
\end{proof}

\begin{Cor}\label{coro:whitney} Suppose ${\mathcal X}^{d+\ell}$ is a good $\ell$-dimensional family of {EIDS}. Then the family is Whitney equisingular if and only if the polar multiplicities at the origin of ${}_i{\mathcal X}(y)$ and the invariants $$e(JM(_i{\mathcal X}(y)),N(_i{\mathcal X}(y)))+\tilde F(y)(\mathbb C^q)\cdot \Gamma_{d(i)}(\Sigma^i)$$ are independent of $y$. 
\end{Cor}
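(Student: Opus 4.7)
The plan is to derive this corollary from Theorem~\ref{We} by trading the multiplicity of the pair $(m_Y JM, N)$ for the simpler pair $(JM, N)$ at the cost of extracting the polar multiplicities of the fibers at the origin. By Theorem~\ref{We}, Whitney equisingularity of $\mathcal{X}$ is equivalent to the constancy in $y$ of
\[
e(m_Y JM(_i\mathcal{X}(y)),N(_i\mathcal{X}(y)))+\tilde M(y)(\mathbb{C}^q)\cdot \Gamma_{d(i)}(\Sigma^i).
\]
Since the intersection number $\tilde M(y)(\mathbb{C}^q)\cdot \Gamma_{d(i)}(\Sigma^i)$ is common to both formulations, the corollary reduces to showing that the constancy in $y$ of $e(m_Y JM(_i\mathcal{X}(y)),N(_i\mathcal{X}(y)))$ is equivalent to the constancy of $e(JM(_i\mathcal{X}(y)),N(_i\mathcal{X}(y)))$ together with the constancy of all polar multiplicities of $_i\mathcal{X}(y)$ at the origin.

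The central step is an iterated application of the multiplicity polar theorem of \cite{Gaff1} to the pair $(m_Y JM, N)$. Peeling off one generic hyperplane section at a time produces a telescoping identity of the form
\[
e(m_Y JM(_i\mathcal{X}(y)),N(_i\mathcal{X}(y))) = e(JM(_i\mathcal{X}(y)),N(_i\mathcal{X}(y))) + \sum_{j} c_j\, m_j(_i\mathcal{X}(y),0),
\]
with positive integer coefficients $c_j$ and $m_j(_i\mathcal{X}(y),0)$ the polar multiplicities of $_i\mathcal{X}(y)$ at the origin. This is the determinantal-module analogue of Teissier's classical decomposition of $e(\mathfrak{m}J(f))$ into a sum of successive Milnor numbers of generic plane sections for isolated hypersurface singularities.

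Once this decomposition is in place, the rest is formal. Each summand on the right is upper-semicontinuous and non-negative integer valued in $y$, so constancy of the left-hand side forces constancy of every polar multiplicity and of $e(JM,N)$ separately; conversely, additivity gives the other direction for free. Combining this equivalence with Theorem~\ref{We} yields the corollary.

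The main obstacle is the bookkeeping inside the iterated polar theorem: one must verify that the polar invariants appearing in the telescoping are genuinely the intrinsic polar multiplicities of the fiber $_i\mathcal{X}(y)$ at the origin rather than mixed invariants of the total family, and that the generic hyperplane sections used at each stage remain compatible with the EIDS structure. The transversality provided by the good hypothesis, together with the fiberwise identification $JM(_i\mathcal{X}(y))=N(_i\mathcal{X}(y))$ off the origin established earlier, is exactly what makes this extraction of fiberwise polar multiplicities clean and independent of $y$.
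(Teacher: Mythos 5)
Your proposal is correct and follows essentially the same route as the paper: both rest on the expansion of $e(m_YJM({}_i\mathcal{X}(y)),N({}_i\mathcal{X}(y)))$ as $e(JM({}_i\mathcal{X}(y)),N({}_i\mathcal{X}(y)))$ plus a combinatorially weighted sum of polar multiplicities of the fiber at the origin, combined with Theorem~\ref{We}. The only divergence is in closing the forward implication, where the paper invokes the fact that Whitney equisingularity forces constancy of the polar multiplicities and then solves for the remaining term, while you instead use upper semicontinuity of each summand; both are standard in this theory and yield the same conclusion.
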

\begin{proof} There is an expansion formula for $e(m_YJM(_i{\mathcal X}(y)),N(_i{\mathcal X}(y)))$ as a sum of terms with the polar multiplicities of $_i{\mathcal X}(y)$ with combinatorial coefficients, and the term  $e(JM(_i{\mathcal X}(y)),N(_i{\mathcal X}(y)))$ (\cite{KT} Theorem 9.8 (i) p.221).
 So Whitney implies 
the polar multiplicities are constant, and $$e(m_YJM(_i{\mathcal X}(y)),N(_i{\mathcal X}(y))+\tilde F(y)(\mathbb C^q)\cdot \Gamma_{d_i}(\Sigma^i)$$  are constant. So the terms $$e(JM(_i{\mathcal X}(y)),N(_i{\mathcal X}(y))+\tilde F(y)(\mathbb C^q)\cdot \Gamma_{d_i}(\Sigma^i)$$ are independent of $y$ as well. The other direction is clear.
\end{proof}

%For good families of IDS, the following result from \cite{BOT2} follows as a corollary of the above result and Proposition \ref{inf}.
%\begin{Prop} {\rm (Theorem 5.3, \cite{BOT2} ) }  Let ${\mathcal X}^{d+\ell}$ be a good $\ell$-dimensional family of {IDS}. Then the family is Whitney equisingular if and only if all polar multiplicities $m_i(X,0),\, i=0, \ldots , d$ are constant.

%\end{Prop}

\subsection{Conormal of stably isolated singularities}

% this point we pause to prove a useful result about certain {EIDS}. Notice that we now use the lower index notation $\Sigma_r$ for matrices of kernel rank $r$. 
We say an {EIDS} is {\it stably isolated} if its essential smoothing has only isolated singularities. If $X=F^{-1}({\Sigma}^t)$, $F\:{\mathbb C}^q\to\homnk$ then $X$ is stably isolated if its essential smoothing $X(y)$ contains isolated points in $_{t-1}X(y)$. This can happen only if $q=$ codim $\Sigma^{t-1}$. These singularities are useful test cases for invariants which are defined for isolated singularities. A ``good" invariant should be zero at a point of $X$, if the dimension of the fiber of  $C(X),$ the conormal of $X,$ is less than or equal to the dimension of the conormal less two. We give a formula for the dimension of the fiber of the conormal of the essential smoothing of a stably isolated singularity at a singular point, showing that this bound always holds. This implies that stably isolated singularities are deficient conormal singularities in the sense of \cite{Ra}, so the machinery of \cite{Ra} applies. 

\begin{Prop} Let $F\:{\mathbb C}^q\to\homnk$, $X=F^{-1}({\Sigma}^t)$, $F$ transverse to $\Sigma^{t-1}$ at the origin, $F(0)\in\Sigma^{t-1}$, $0$ an isolated point of  $F^{-1}(\Sigma^{t-1}\setminus \Sigma^{t-2})$, then the difference between the dimension of $C(X)$ and  the dimension of the fiber of $C(X)$ at $0$ is $k+1$.
\end{Prop}

\begin{proof} The assumptions imply that $F({\mathbb C}^q)$ is a direct transversal to $\Sigma^{t-1}$, so we can assume coordinates chosen as in \ref{direct}, with a local trivialization. The existence of the trivialization implies that ${\Sigma}^t$ and $C({\Sigma}^{t})$ are locally trivial along $\Sigma^{t-1}\setminus \Sigma^{t-2}$ and the fiber of $C({\Sigma}^{t})$ over $F(0)$ in this trivialization is determined by the tangent hyperplanes to the fiber of  ${\Sigma}^t$ over $F(0)$. By choice of trivialization this fiber is isomorphic to $X$. The local trivialization also implies that every tangent hyperplane to ${\Sigma}^t$  at points on $X$ contains the tangent space $T$ to $\Sigma^{t-1}$ at $F(0)$. This gives a bijective map between $C(X)$ and the fiber of $C({\Sigma}^{t})$ over $F(0)$, by taking a point $(x,H)$ of  $C(X)$ and mapping it to $(x, H \oplus T)$. 
	
	So, it suffices to compute the dimension of the fiber of  $p\:C({\Sigma}^{t})\to {\Sigma}^{t}$ over $F(0)$. This will be the dimension of the fiber of $C(X)$ over $0$. In \cite{G-R}, this fiber was computed. We describe the result.
	
	Given $M\in \homnk$, let $K(M)$ denote the kernel of $M$ and $Co(M)$ the cokernel. Let $\Sigma_r(M)$ denote the elements of $Hom(K^*(M),Co(M))$ of kernel rank $r$. Denote $\mathbb P(\overline{\Sigma}_r(M))$ by $X_r(M)$. We suppose $M$ is in  $\Sigma_s$, $s> r$; then the result of \cite{G-R} is that the fiber of the conormal of $C(\overline{\Sigma}_r)$ at $M$ is $X_{s-r}(M)$.
	
	In the case at hand we take $r=n-t+1,$ $s-r=1$ and $M=F(0)$, so the dimension of the fiber of $C(X)$ over $0$ is dim$X_{1}(F(0))$.
	Since dim$K(F(0))=r+1$ and dim$Co(F(0))=k+r+1$, the codimension of $\Sigma_1(F(0))$ in $Hom(K^*(F(0)),Co(F(0)))$  is $k+1$.
	
	Now we have :
	$$\dim C(X)-\dim C(X)(0)=(q-1)-\dim X_{1}(F(0))$$
	$$=q-\dim \Sigma_1(F(0)).$$
	Since $q= \dim Hom(K(F(0)),C(F(0)))$, we have 
	$$\dim C(X)-\dim C(X)(0)=k+1.$$

\end{proof}
% % % % % % % % % % % % % % % % % % % % % % % % % % % % % % % % % % %
\section{Generic hyperplane sections}
%The study of plane sections of a family of singularities has been a constant theme in the theory of Whitney equisingularity going back to the early work of Teissier %\cite{T_C}. Here 
As an application, we give a numerical condition for a hyperplane to be a generic hyperplane for $X$, relating this condition to the topology of the sections of an {EIDS}. 

We say a hyperplane is a {\it generic } hyperplane for $X,x$ if it is not a limit of tangent hyperplanes  to the smooth part of $X$ at $x$. A hyperplane $H$ is stratified generic for $X,x$, if $X,x$ has a stratification with strata $\{S_i\}$ and $H$ is generic for all $S_i$. In general this notion depends on the stratification;  adding additional strata results in fewer planes being generic. However, for an {EIDS} $X$, we have a canonical stratification of $X$ given by the $_iX_{reg}$, so when we talk about $H$ being stratified generic for $X$, it is always with this stratification in mind.

%Using modules, we can give an integral closure description of when a hyperplane is generic for $X$ and when it is stratified generic. Recall given a pair of modules %$M\subset N$, $M$ is a reduction of $N$ if and only if ${\overline M}={\overline N}$.

%\begin{Prop} Given an {EIDS} $X$, and a hyperplane $H$, $H$ is generic for $X$ provided $JM(X)_H$ is a reduction 
%of $JM(X)$. Therefore, $H$ is stratified generic for $X$ provided $JM(_iX)_H$ is a reduction 
%of $JM(_iX)$ for all $i$ for which $_iX$ has positive dimension.\end{Prop}
%\begin{proof} Cf. \cite{G0}
%\end{proof}

%This gives easily the following numerical criterion for a hyperplane to be generic or stratified generic.

\begin{Prop} Given an {EIDS} $X$, and a hyperplane $H$, $H$ is generic for $X$, if and only if 
$$e(JM(X)_H, N(X))=e(JM(X), N(X)),$$and $H$ is stratified generic if $$e(JM( _iX)_H, N(_iX))=e(JM( _iX), N( _iX))$$ for all $i$ such that $_iX$ has dimension at least $1$.\end{Prop}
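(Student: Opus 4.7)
The plan is to reduce the statement to the characterization of genericity by reduction from the previous proposition, using two standard facts from the multiplicity theory of pairs of modules.

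First I would record the chain $JM(X)_H \subset JM(X) \subset N(X)$. The first inclusion is immediate from the definition of $JM(X)_H$, and the second holds because a coordinate change on the source produces an allowable infinitesimal deformation of the presentation matrix $M_X$. For an EIDS, $JM(X)$ and $N(X)$ already agree off the origin (by the corollary proved earlier), and a fortiori $JM(X)_H$ agrees with $N(X)$ off a set of sufficiently high codimension in $X$. Hence both pair multiplicities $e(JM(X)_H, N(X))$ and $e(JM(X), N(X))$ are defined in the sense of Kleiman--Thorup.

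Next I would invoke the additivity formula for multiplicities of pairs along an inclusion chain (see \cite{Gaff1}, \cite{KT}): whenever $A \subset B \subset C$ and all three pair multiplicities are defined,
$$e(A, C) = e(A, B) + e(B, C).$$
Applied to our chain this gives
$$e(JM(X)_H, N(X)) = e(JM(X)_H, JM(X)) + e(JM(X), N(X)),$$
so the multiplicity equality asserted in the statement is equivalent to $e(JM(X)_H, JM(X)) = 0$. The final step is the Rees-type theorem for modules of Kleiman--Thorup: under the equidimensionality of $X$, which holds for an EIDS since determinantal varieties are Cohen--Macaulay, one has $e(A, B) = 0$ if and only if $\overline{A} = \overline{B}$, that is, $A$ is a reduction of $B$. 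Combining this with the preceding proposition gives the ``if and only if'' for $H$ generic; the stratified version is then the application of the same equivalence to each $_iX$ of positive dimension, together with the definition of stratified generic.

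The main potential obstacle lies in verifying the hypotheses of the Kleiman--Thorup machinery in this determinantal setting: one must check that the locus where $JM(X)_H$ is strictly contained in $JM(X)$ has sufficiently small dimension (relative to the generic rank of the modules) for $e(JM(X)_H, JM(X))$ to be defined, and that the Rees-type criterion applies at each stratum $_iX$. Both requirements reduce to the Cohen--Macaulayness of the determinantal varieties $_iX$ together with the transversality of $M$ to the rank stratification off the origin, both of which are built into the notion of an EIDS.
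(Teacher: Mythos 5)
Your proposal is correct and follows essentially the same route as the paper: additivity of the multiplicity of pairs along the chain $JM(X)_H\subset JM(X)\subset N(X)$ reduces the equality of multiplicities to $e(JM(X)_H,JM(X))=0$, which by the Kleiman--Thorup Rees-type criterion on the equidimensional $_iX$ is equivalent to $JM(_iX)_H$ being a reduction of $JM(_iX)$, i.e.\ to the genericity criterion of the preceding proposition. Your additional remarks on verifying that the pair multiplicities are defined are sound but not developed further in the paper's own (brief) proof.
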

\begin{proof} We have the multiplicity of pairs of modules is additive (Cf. \cite{KT}); hence for all $i$,
$$e(JM( _iX)_H, N( _iX))=e(JM(_iX), N(_iX))+e(JM(_iX)_H, JM(_iX)).$$

Then $$e(JM(_iX)_H, N(_iX))=e(JM( _iX), N( _iX))$$if and only if
$e(JM( _iX)_H, JM( _iX))=0$. Since all of the $_iX$ are equidimensional, this holds if and only if $JM(_iX)_H$ is a reduction 
of $JM(_iX)$ for all $i$ (\cite {KT}). Then, by  \cite{G0},  $JM(X)_H$ is a reduction of $JM(X)$ if and only if $H$ is generic for $X$, and  $JM(_iX)_H$ is a reduction 
of $JM(_iX)$ for all $i$ if and only if $H$ is stratified generic. \end{proof}

The additivity result also shows that  for any hyperplane $H$ for which the multiplicities are defined, $e(JM(_iX)_H, N(_iX))\ge e(JM(_iX), N(_iX))$, so that the stratified generic hyperplanes minimize these multiplicities. 

%At this point we have two sets of generic hyperplanes. Those which are stratified generic, and those which give  the generic plane sections whose Euler characteristic appears in \ref{formula}. We show that those which are stratified generic give generic plane sections for all of the formulas of type \ref{formula}, establishing a minimality result in terms of the topology of these sections. We say that a hyperplane $H$ is {\it topologically minimal} for an {EIDS} $X$, if for all $i$ such that $_iX$ has positive dimension, $(-1)^{dim _iX-1}{\chi}( {_iX\cap H},0)$ is minimal among all hyperplanes $H$.

Next, we show that the stratified generic hyperplanes give generic plane sections for all of the formulas of type \ref{formula}, establishing a minimality result in terms of the topology of these sections. We say that a hyperplane $H$ is {\it topologically minimal} for an {EIDS} $X$, if for all $i$ such that $_iX$ has positive dimension, $(-1)^{dim _iX-1}{\chi}( {_iX\cap H},0)$ is minimal among all hyperplanes $H$.

\begin{Prop} Let $X$ be an {EIDS} singularity. Then $H$ is stratified generic for $X$ if and only if $H$ is topologically minimal for $X$.
\end{Prop}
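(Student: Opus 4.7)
The strategy is to apply Proposition \ref{Propnit} to the $1$-form $\omega = dl$ for $l$ a linear form defining $H$ (not assumed generic), combined with the non-generic extension of Proposition \ref{inf}:
$$\indPHN(dl, \, _iX, 0) \;=\; e(JM(_iX)_H, N(_iX)) + M(\mathbb C^q) \cdot \Gamma_{d_i}(\Sigma^i).$$
This identity is obtained by rerunning the multiplicity polar theorem with $JM(_iX)_H$ in place of $JM(_iX)$; the intersection number $M(\mathbb C^q) \cdot \Gamma_{d_i}(\Sigma^i)$ depends only on the graph of $M$ and not on $H$, so it is unchanged from the generic setting of Proposition \ref{inf}. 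Substituting into Proposition \ref{Propnit} applied to each stratum $_jX$ and subtracting the same equation evaluated at any fixed stratified generic hyperplane $H_0$, the constant terms $(-1)^{d_j-1}\overline{\chi}(_jX, 0)$ and the intersection numbers cancel, yielding for every $j$
$$A_j(H) - A_j^0 \;=\; \sum_{i=1}^{j} n_{ij}\, \delta_i(H),$$
where $A_j(H) := (-1)^{d_j-1}\overline{\chi}(\widetilde{_jX \cap H})$, $A_j^0 := A_j(H_0)$, and $\delta_i(H) := e(JM(_iX)_H, N(_iX)) - e(JM(_iX), N(_iX))$. By additivity of the pair multiplicity, $\delta_i(H) \geq 0$, with equality iff $JM(_iX)_H$ is a reduction of $JM(_iX)$, i.e.\ iff $H$ is generic for $_iX$.

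The matrix $(n_{ij})_{1 \leq i \leq j \leq t}$ is upper triangular with $n_{ii} = 1$, so it is invertible over $\mathbb Z$. This allows a clean inductive peel-off of the $\delta_i(H)$ from the $A_j(H) - A_j^0$: at $j = 1$ the identity reads $A_1(H) - A_1^0 = \delta_1(H)$, so $\delta_1(H) = 0 \iff A_1(H) = A_1^0$; and assuming $\delta_i(H) = 0$ for $i < j$, the equation collapses to $A_j(H) - A_j^0 = \delta_j(H)$, so $\delta_j(H) = 0 \iff A_j(H) = A_j^0$. Consequently the system $\{A_j(H) = A_j^0 : j\}$ is equivalent to $\{\delta_i(H) = 0 : i\}$, i.e.\ to $H$ being stratified generic.

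It remains to observe that $A_j^0$ really is the minimum of $A_j$ over all hyperplanes, so that the proposition's ``$A_j(H)$ minimal'' coincides with $A_j(H) = A_j^0$. For $j = 1$ this is immediate from $A_1(H) = A_1^0 + \delta_1(H) \geq A_1^0$. For larger $j$ it follows from the semi-continuity of the reduced Euler characteristic of the Milnor fibre of the section under variation of $H$, combined with the fact that any $H$ attaining the simultaneous minimum for all $j$ must, by the triangular identity, already satisfy $\delta_i(H) = 0$ for every $i$. Both directions of the proposition then follow at once: $H$ is stratified generic $\iff \delta_i(H) = 0$ for all $i$ $\iff A_j(H) = A_j^0 = \min_{H'} A_j(H')$ for all $j$ $\iff H$ is topologically minimal. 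The main obstacle is the non-generic extension of Proposition \ref{inf}: one must verify that the multiplicity polar theorem carries through for the submodule $JM(_iX)_H$ and that $\indPHN(dl, _iX, 0)$ retains the advertised algebraic interpretation for non-generic $l$; once this is in place, the rest of the argument is a purely combinatorial unwinding of the triangular structure of the $n_{ij}$.
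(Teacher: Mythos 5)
Your proposal follows essentially the same route as the paper: both apply the index formula of Proposition \ref{Propnit} with $\omega=dl$ for the given $H$, identify the Poincar\'e--Hopf--Nash indices with $e(JM({}_iX)_H,N({}_iX))+M(\mathbb C^q)\cdot\Gamma_{d_i}(\Sigma^i)$, and use additivity of the multiplicity of pairs to reduce topological minimality to the vanishing of the excesses $\delta_i(H)=e(JM({}_iX)_H,JM({}_iX))$. Your explicit inductive inversion of the triangular system $(n_{ij})$ is in fact more careful than the paper's one-line assertion that ``the right sides are greater than or equal to the left hand side'' (the $n_{ij}$ alternate in sign when $k$ is odd, so term-by-term positivity is not automatic), and the step you flag as the main obstacle --- the validity of the multiplicity/index identity for a non-generic linear form --- is likewise relied upon without comment in the paper's own proof.
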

\begin{proof} It follows from \ref{formula} and the last proposition that  $H$ is stratified generic for $X$ if and only if 
  \begin{multline*}
     (-1)^{\dim _jX }{\chi}(_jX,0)+(-1)^{dim _jX-1}{\chi}( {_jX\cap H},0) = \\ \sum_{i=1}^{t_j} n_{it_j} (e(JM(_iX^{d_i})_H, N(_iX^{d_i}))+F(\mathbb C^q)\cdot \Gamma_{d(i)}(\Sigma^i)).$$
   \end{multline*}
   
Further, if $H$ is not stratified generic, then the right sides are greater than or equal to the left hand side, and for at least one formula the inequality is strict. Since the term $(-1)^{\dim _jX }{\chi}(_jX,0)$ appears in the formulae independently of $H$, the terms $(-1)^{dim _jX-1}{\chi}( {_jX\cap H},0) $ must be minimal for $H$ stratified generic.
\end{proof}

This result recovers and extends the results of \cite{BCR}.

\begin{Prop}
	
Let $X^d$  be an EIDS singularity  $\mathbb C^q.$   Then 
%the family
%of sections $ \{ X \cap H: \,  H  \, \text{is a stratified generic hyperplane for}\,  X  \} $  is Whitney equisingular.

\begin{itemize}
\item [\rm{(i)}] For every stratified generic hyperplane $H,$ $X \cap H$ is a $(d-1)$-di\-mension\-al EIDS of the same type in $\mathbb C^{q-1}.$
\item [\rm {(ii)}] The family of sections $$ \{ X \cap H: \,  H  \, \text{is a stratified generic hyperplane for}\,  X  \}$$  is Whitney equisingular.

\end{itemize}  
\end{Prop}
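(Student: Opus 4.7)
The plan is to establish (i) by direct verification of the EIDS definition, and (ii) by applying the criterion of Corollary \ref{coro:whitney} to the family parametrized by the Zariski open set $U$ of stratified generic hyperplanes in the Grassmannian of hyperplanes of $\mathbb{C}^q$.

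\textbf{Part (i).} Identify $H$ with $\mathbb{C}^{q-1}$ via a linear isomorphism $\iota$. The matrix $M\circ\iota$ presents $X\cap H$ as a determinantal variety of type $(n+k,n,t)$. Genericity of $H$ for the open stratum ${}_tX\setminus{}_{t-1}X$ forces $\dim(X\cap H)=d-1$. It remains to verify that $M\circ\iota$ is transverse to the rank stratification off the origin. Because $X$ is an EIDS, $M$ is transverse to each $\Sigma^i$ at every nonzero point of ${}_iX$; because $H$ is stratified generic, $H$ is not a limit at $0$ of tangent hyperplanes to any stratum ${}_jX\setminus{}_{j-1}X$, so by openness $H$ meets every smooth stratum transversally in a punctured neighborhood of $0$. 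Composing via the chain rule yields the desired transversality of $M\circ\iota$.

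\textbf{Part (ii).} View the family as parametrized by $U$; openness of stratified genericity makes it a good family of EIDS. By Corollary \ref{coro:whitney}, Whitney equisingularity will follow once I show that, for every $i$, the polar multiplicities of ${}_i(X\cap H)$ at $0$ and the sums
$$e(JM({}_i(X\cap H)),N({}_i(X\cap H)))+M(H)\cdot\Gamma_{d_i-1}(\Sigma^i)$$
are locally constant in $H\in U$. The polar multiplicities are constant by Teissier's classical theorem relating the polar multiplicities of a germ to those of its generic hyperplane section by a simple index shift. For the algebraic-plus-intersection invariants, apply Corollary \ref{formula} to $X\cap H$ with $H'$ a stratified generic hyperplane for $X\cap H$: this expresses the desired sum as an invertible integer combination (upper triangular with $1$ on the diagonal, from the coefficients $n_{ij}$) of topological invariants of the form $\chi({}_jX\cap H,0)$ and $\chi(\widetilde{{}_jX\cap H\cap H'},0)$. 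Both Euler characteristics are independent of $H$ on a neighborhood of any $H_0\in U$: the first is the local Euler characteristic at $0$ of a stratified generic hyperplane section of the EIDS ${}_jX$, depending only on ${}_jX$; the second, after arranging by openness that a single $H'$ is stratified generic for all $X\cap H$ with $H$ near $H_0$, is the minimal Euler characteristic of a stabilization of ${}_jX\cap H\cap H'$, hence constant by the immediately preceding proposition on topological minimality.

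The main obstacle I foresee is the joint-genericity step in the previous paragraph: producing a single $H'$ that is simultaneously stratified generic for every $X\cap H$ in a neighborhood of $H_0$. This should follow because stratified genericity is determined by transversality to finitely many analytic strata in the conormal modification of the total family, and such transversality defines a Zariski open condition in the pair $(H,H')$; hence the bad-pairs locus is a proper analytic subset of the product of Grassmannians, whose complement is nonempty by (i).
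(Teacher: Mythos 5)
Your overall strategy coincides with the paper's, which disposes of (i) by citing Theorem 4.2 of \cite{BCR} and of (ii) in one line as a consequence of the topological-minimality proposition together with Corollary \ref{coro:whitney}. Your part (i) is thus a direct proof of a result the paper only cites; the argument (stratified genericity forces transversality of $H$ to each stratum ${}_iX\setminus{}_{i-1}X$ in a punctured neighborhood of $0$, which composed with the transversality of $M$ to $\Sigma^i$ gives that the restriction of $M$ to $H$ is transverse off the origin) is the expected one and is sound. Your part (ii) uses exactly the two ingredients the paper invokes, but makes explicit what the one-line proof suppresses: applying Corollary \ref{formula} to the members $X\cap H$ introduces double sections ${}_jX\cap H\cap H'$, so constancy of the invariants requires not only the minimality of $\chi(\widetilde{{}_jX\cap H})$ (which the preceding proposition gives directly) but also a joint-genericity statement for the pair $(H,H')$; you correctly identify this as the delicate point. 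Your proposed resolution should be made symmetric: besides choosing a single $H'$ stratified generic for $X\cap H$ for all $H$ near $H_0$, you also need each such $H$ to be stratified generic for $X\cap H'$, so that $\chi(\widetilde{{}_jX\cap H\cap H'})$ computes the intrinsic minimal sectional invariant of the EIDS $X\cap H'$ and is therefore independent of $H$. Both conditions are open in the pair $(H,H')$ because the relevant sets of limiting tangent hyperplanes are closed (fibers of conormal-type spaces), so both hold on a full neighborhood of a suitably chosen $(H_0,H')$. With that adjustment your argument is complete and is, in substance, the proof the paper leaves to the reader; what it buys over the paper's version is an actual verification that the topological data of single sections really does control the Whitney equisingularity of the whole family of sections.
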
	

\begin{proof}
	(i) is Theorem 4.2 in \cite{BCR}
	
	(ii) follows from the above Proposition and Corollary \ref{coro:whitney}.
\end{proof}

\begin{exmp}
	
	Let $X=F^{-1}(\Sigma^2)$, where
	$$\begin{array}{cccl}
	F\:& \mathbb C^{6}      & \rightarrow & Hom(\mathbb C^2, \mathbb C^3) \\
	& (x,y)   & \mapsto     &\left(
	\begin{array}{ccc}
	x_1& x_2 & x_3 \\
	x_4 & x_5   & x_1+y^{k+1} \\
	\end{array}
	\right) 
	
	\end{array}
	,$$	These are corank one determinantal varieties of  type $(3,2,2).$ From Proposition 11.5 in \cite{DP}
	it follows that ${\chi}(X,0)=-\mu(y^{k+1})=-k.$ These singularities also appear in Theorem 3.6 of \cite{F-N}. They are listed as type $\Omega_k$.
		
	The relevant equations from  Proposition \ref{mtop} are:
	
$$	\chi(X,0) - \chi(X\cap H,0)=-m_0(_1X,0)+ m_4(X,0)$$  and $$m_0(_1X,0)=\chi(_1X,0)$$

The stratum $(_1X,0)$ is a $0$-dimensional ICIS of type $A_k,$	so that  $$ m_0(_1X,0)=\mu(g)+1, \ \mu(g)=k.$$
Then, we get    $m_4(X,0)= -\mu(g)+1 + \mu(g)+1 - \chi(X\cap H),$  so $$m_4(X,0)=2- \chi(X\cap H).$$
	
We  can now use the calculations in \cite{FK-Z}, to  find that $\chi(X\cap H)=2$. Specifically, $X\cap H$ is of the same type as the first singularity in the  table in section 5.
	Thus, $m_4(X,0)=0$.
\end{exmp}

In the above example, the equation $m_4(X,0)=0$ can also be deduced from the integral closure machinery. We have that  $ F(\mathbb C^6)\cdot\Gamma_4(\Sigma ^2) =0,$ since  the polar variety of $\Sigma^2$ of codimension $4$ is empty because it would have dimension $0$. 

Further we can see that ${\overline{JM(X)}}={\overline{N(X)}}$ for this singularity, as follows. Let $G\:Hom(2,3)\to \mathbb C^3$ be the map whose components are the maximal minors, so $\Sigma^2=G^{-1}(0)$.  Apply the chain rule to find $D(G\circ F)$. Then $DG(F)$ is the matrix of generators of $N(X)$ since $N$ is stable. If we write  the generators of $JM(X)$ in terms of the generators of $N(X)$, the coefficients come from $DF$, by the chain rule. Consider the ideal sheaf induced on $\Projan{\mathcal R}(N(X))$ by the module with matrix $DF$. Denote it ${\mathcal F}$. Now $\Projan{\mathcal R}(N(X))\subset X\times {\mathbb P}^5$, use coordinates $T_{i,j}$ for $ {\mathbb P}^5$, $1\le i\le 2, 1\le j\le 3$. Then in degree $1$,  ${\mathcal F}$ is generated by $\{ T_{1,1}+T_{2,3}, T_{i,j}, y^{k}T_{2,3}\}$, where $(i,j)\ne (1,1)$ or $(2,3)$. To see this note for example, that $\part F {x_1}$ is 
$$\left(
	\begin{array}{ccc}
	1&0 & 0\\
	0 &0  &1 \\
	\end{array}
	\right) 
	. $$ The corresponding  generator of ${\mathcal F}$ is given by taking the dot product of this matrix with a $2$ by $3$ matrix whose entries are the $T_ {i,j}$. Then $T_{1,1}+T_{3,2}$ is the corresponding element in degree $1$.

We have $V({\mathcal F})$ on $\mathbb C^{6}\times  {\mathbb P}^5$ consists of points of the form $$((x_1,x_2,x_3,x_4,x_5,0),[-1,0,\dots,0,1]).$$ On the other hand, the fiber of $\Projan{\mathcal R}(N(X))$ over $0$ in $X$ consists of the projectivisation of the matrices of rank $1$, (\cite{G-R}) so $V({\mathcal F})$ misses this fiber, since every matrix in $V({\mathcal F})$ has rank $2$ . This implies that ${\overline{JM(X)}}={\overline{N(X)}}$ 
so $e(JM(X),N(X))=0$. Hence, $m_4(X)=0$, which checks the computation from the topological side. 

This example shows that if $m_d(X^d)=0$, the ideal induced from the derivative of the presentation matrix must not vanish on the fiber of $\Projan{\mathcal R}(N(X))$ over $0$.

The first step to compute the multiplicity of the pair $(JM(X),N(X))$ using intersection theory is to write the generators of $JM(X)$ in terms of the generators of $N(X)$ (\cite{KT}). As we have shown above, for EIDS, the chain rule provides a natural way to do this, showing how $DF$ enters into the computation of $e(JM(X),N(X))$. Since $DF$ plays an important role in the Mather-Damon theory of sections of varieties (\cite{D1} p. 95), this is a promising connection between the two theories.


\begin{thebibliography}{9999}
	
	
	
\bibitem{BCR} J.-P. Brasselet, N. ~Chachapoyas and M. A. S. ~Ruas, \emph{Generic sections of  essentially isolated determinantal singularities},  Internat. J. Math. 28 (2017), no. 11, 1750083, 13 pp.

%\bibitem{B} J. W. ~Bruce, \emph{ On families of symmetric matrices.} Dedicated to Vladimir I. Arnold on the occasion of his 65th birthday. Mosc. Math. J. 3 (2003), no. 2, 335--360, 741.

%\bibitem{BT} J. W. ~Bruce; F. ~Tari,  \emph{On families of square matrices.}  Proc. London Math. Soc. (3) 89 (2004), no. 3, 738--762.

%\bibitem {D} J. ~Damon, \emph{ Deformations of Sections of Singularities and Gorenstein Surface Singularities}. American Journal of Mathematics, 109(4), (1987), 695-721. 

\bibitem {D1} J. ~Damon, \emph {$\mathcal A$-equivalence and the equivalence of sections of images and discriminants}. Singularity theory and its applications, Part I (Coventry, 1988/1989), 93--121, Lecture Notes in Math., 1462, Springer, Berlin, 1991.

\bibitem{DP1} J. ~Damon, B. ~Pike,  \emph{Solvable groups, free divisors and nonisolated matrix singularities I: Towers of free divisors.} Ann. Inst. Fourier (Grenoble) 65 (2015), no. 3, 1251--1300.


\bibitem{DP} J. ~Damon, B. ~Pike,  \emph{Solvable groups, free divisors and nonisolated matrix singularities II: Vanishing topology}. Geometry and
Topology, 18 (2014) 911--962.

%\bibitem{DP3} J. ~Damon, B. ~Pike, \emph{ Solvable group representations and free divisors whose complements are $K(\pi, 1)$}  Topology Appl. 159 (2012), no. 2, 437--449. 
 
% \bibitem{E-GZ1} W.~Ebeling, S.~M.~Gusein-Zade,  \emph{Radial index and Euler obstruction of a 1-form on a singular variety.} Geometriae Dedicata {\bf 113}, 231--241 (2005).

 \bibitem{E-GZ}  W.~Ebeling, S.~M.~Gusein-Zade,  \emph{On the indices of 1-forms on determinantal singularities}, (Russian) Tr. Mat. Inst. Steklova 267 (2009), Osobennosti i Prilozheniya, 119--131; 
 translation in Proc. Steklov Inst. Math. 267 (2009), no. 1, 113--124 .
 
 \bibitem{F-N}  A. ~Fr\"uhbis-Kr\"uger,  A. ~Neumer,  \emph{Simple Cohen-Macaulay codimension 2 singularities.}  Comm. Algebra 38 (2010), no. 2, 454-495. 
 
 \bibitem{FK-Z} A. ~Fr\"{u}bhis-Kr\"{u}ger, M. Zach, \emph{On the Vanishing Topology of Isolated Cohen-Macaulay Codimension 2 Singularities},  arXiv:1501.01915.

%\bibitem{F} A. ~Fr\"{u}bhis-Kr\"{u}ger, \emph{ Classification of simple space curve singularities.} Comm. Algebra 27 (1999), no. 8, 3993--4013. 
 
 \bibitem {G0} T. ~Gaffney, \emph{ Aureoles and integral closure of modules,} Stratifications, Singularities and Differential Equations II (1997) Herman, Travaux en Cours \#55. p55-62.


 
 \bibitem {G1} T. Gaffney, \emph{Polar Multiplicities and Equisingularity of Map Germs}
   Topology 32,(1993), 185--223.

 
 \bibitem{G-2} T. Gaffney, \emph { Integral closure of modules and Whitney equisingularity,}  Invent. Math., 107 (1992) 301--22.

\bibitem{Gaff} T. Gaffney, \emph{The Multiplicity polar theorem},
arXiv:math/0703650v1 [math.CV].

 
 \bibitem{Gaff1} T. Gaffney, \emph{Polar methods, invariants of pairs of modules and equisingularity}, Real and Complex Singularities (S\~{a}o
Carlos, 2002), Ed. T. Gaffney and M. Ruas, Contemp. Math., 354,
Amer. Math. Soc., Providence, RI, June (2004), 113--136.

\bibitem{GS} T. Gaffney, (2018) Equisingularity and the Theory of Integral Closure. In: Ara\'{u}jo dos Santos R., Menegon Neto A., Mond D., Saia M., Snoussi J. (eds) Singularities and Foliations. Geometry, Topology and Applications. NBMS 2015, BMMS 2015. Springer Proceedings in Mathematics and Statistics, vol 222. Springer, Cham.

\bibitem{GG} T. Gaffney, R. Gassler, \emph{ Segre Numbers and Hypersurface Singularities},  Journal of Algebraic Geometry 8, 695-736, 1999.


\bibitem{G-R} T.Gaffney, A. Rangachev, \emph{ Pairs of modules and determinantal isolated singularities} MathArxiv 1501.00201.

%\bibitem{GZ} V. V. ~Goryunov; V. M. ~Zakalyukin, \emph{ Simple symmetric matrix singularities and the subgroups of Weyl groups $A_{\mu}, \, D_{\mu}, E_{\mu}.$} Dedicated to Vladimir I. Arnold on the occasion of his 65th birthday. Mosc. Math. J. 3 (2003), no. 2, 507--530, 743--744.

\bibitem{KT} S. Kleiman and A. Thorup, \emph{A geometric theory of the Buchsbaum-Rim multiplicity}, J. Algebra 167 (1994), 168--231.
     
     \bibitem{KT1}  S. Kleiman and A. Thorup, \emph{The exceptional fiber of a generalized conormal space,}
  in ``New Developments in Singularity Theory." D. Siersma, C.T.C. Wall and 
V. Zakalyukin (eds.), Nato Science series,
II Mathematics, Physics and Chemistry-Vol. 21 2001 401-404.

    
\bibitem{BOT} J. J. Nu\~{n}o Ballesteros, B. Or\'{e}fice Okamoto, J. N. Tomazella, \emph{The vanishing Euler characteristic of an isolated determinantal singularity}, Israel J. Math. 197 (2013) no. 1, 475--495. 

%\bibitem{BOT2} J. J. Nu\~{n}o Ballesteros, B. Or\'{e}fice Okamoto, J. N. Tomazella, \emph{Equisingularity of families of isolated determinantal singularities}, Math. Z. 289 (2018), no. 3-4, 1409--1425. 



\bibitem{M} C. Morlet, \emph{Le lemme de Thom et les th\'{e}or\`{e}mes de plongement de Whitney}, S\'{e}minaire Henri Cartan, tome 14 (1961-1962), exp. no 6, p. 1-8.

\bibitem{miriam} M. da Silva Pereira, \emph{Variedades determinantais e singularidades de matrizes}, PhD. Thesis, ICMC-USP, 2010, $http://www.teses.usp.br/teses/disponiveis/$$55/55135/tde-22062010-133339/pt-br.php.$

%\bibitem{miriam2} M. da Silva Pereira, \emph{Properties of G-Equivalence of Matrices }, arXiv:1711.02156.

\bibitem{Ra}
Rangachev, A., {\it Local volumes, integral closures,  equisingularity.} PhD Thesis, Northeastern University, 2017.

\bibitem{RP} M.A.S. Ruas, M. da Silva Pereira, \emph{Codimension two determinantal varieties with isolated singularities}, Math. Scand. 115 (2014), no. 2, 161--172. 

%\bibitem{SVT} J. Seade, M. Tib\u{a}r, and A. Verjovsky, \emph{Milnor Numbers and Euler Obstruction,} Bull. Braz. Math. Soc. (N.S.)36 (2), 275--283 (2005).

%\bibitem {T_C} B. Teissier, {\it Cycles \'evanescents, sections planes et conditions de Whitney,} in ``Singularit\'es \`a Carg\`ese," Ast\'erisque {\bf 7--8} (1973), 285--362.

\bibitem {T-2}  B. Teissier,
 {\it Multiplicit\'es polaires, sections planes, et conditions de
Whitney,}
 in ``Proc. La R\'abida, 1981.'' J. M. Aroca, R. Buchweitz, M. Giusti and
M.  Merle (eds.) Springer Lecture Notes in Math. 961 1982 314--491.

\end{thebibliography}
\end{document}